\documentclass[11pt]{amsart}

\usepackage{amssymb,latexsym}
\usepackage{amsthm}
\usepackage{amsmath}
\usepackage{amsfonts}
\usepackage{graphicx}
\usepackage[all]{xy}
\usepackage{fancyhdr}
\usepackage[top=1in,bottom=1in,left=1.25in,right=1.25in]{geometry}

\newtheorem{theorem}{Theorem}[section]
\newtheorem{definition}[theorem]{Definition}
\newtheorem{remark}[theorem]{Remark}
\newtheorem{proposition}[theorem]{Proposition}
\newtheorem{corollary}[theorem]{Corollary}

\def\Q{\mathbb{Q}}
\def\F{\mathbb{F}}
\def\R{\mathbb{R}}
\def\Z{\mathbb{Z}}
\def\A{\mathbb{A}}

\def\C{\mathbb{C}}
\def\G{\mathbb{G}}

\def\I{\mathcal{I}}

\def\M{\mathcal{M}}

\def\Zm{\mathcal{Z}}
\def\V{\mathcal{V}}
\def\vep{\varphi}
\def\veps{\varepsilon}
\def\vp{\varpi}

\def\lra{\longrightarrow}
\def\ra{\rightarrow}
\def\ov{\overline}
\def\ul{\underline}
\def\wh{\widehat}
\def\wt{\widetilde}
\def\st{\stackrel}
\def\tr{\textrm}

\usepackage[pdftex]{hyperref}

\begin{document}
\title{On the cohomology of some simple Shimura varieties with bad reduction}
\author{Xu Shen}
\date{}
\address{Fakult\"at f\"{u}r Mathematik\\
Universit\"at Regensburg\\
Universitaetsstr. 31\\
93040, Regensburg, Germany} \email{xu.shen@mathematik.uni-regensburg.de}

\address{Current address: Morningside Center of Mathematics\\
	No. 55, Zhongguancun East Road\\
	Beijing 100190, China}\email{shen@math.ac.cn}

\renewcommand\thefootnote{}
\footnote{2010 Mathematics Subject Classification. Primary: 11G18; Secondary: 14G35.}
\keywords{Shimura varieties, quaternion algebras, cohomology}

\begin{abstract}
We determine the Galois representations inside the $\ell$-adic cohomology of some quaternionic and related unitary Shimura varieties at ramified places. The main results generalize the previous works of Reimann and Kottwitz in this setting to arbitrary levels at $p$, and confirm the expected description of the cohomology due to Langlands and Kottwitz.
\end{abstract}

\maketitle
\tableofcontents

\section{Introduction}
This paper is a continuation of our work \cite{Sh}. Based on the methods and results established in \cite{SS} and \cite{Sh}, we determine the Galois representations inside the $\ell$-adic cohomology of some quaternionic and related unitary Shimura varieties at ramified places. The main results generalize the work of Reimann \cite{Re1,Re4} and Kottwitz \cite{Ko2} in this setting to arbitrary levels at $p$ ($\neq \ell$), and confirm the expected description of the cohomology due to Langlands and Kottwitz.

The problem of determining the Galois representations in the cohomology of Shimura varieties, or somehow equivalently, of computing the Hasse-Weil zeta functions of these varieties, has been playing a central role in the Langlands program. Langlands and Kottwitz had given a conjectural description of the Galois representations inside the cohomology, cf. \cite{Ko3}. Roughly it says that, the Galois representation associated to an automorphic representation when restricting to a place above $p$ is given by the local Langlands correspondence for the local reductive group. Many authors have made great contribution to this field. For the related history we refer to the introductions of \cite{Ko3, Ko2}. In \cite{SS} Scholze and Shin have solved this problem for some compact unitary Shimura varieties at (ramified) split places such that the related local reductive groups are products of Weil restriction of $GL_n$. In \cite{Sh} we have also solved the case of Shimura varieties which admit $p$-adic uniformization by finite products of Drinfeld upper half spaces. There the local reductive groups have some factors as the multiplicative group of a central division algebra of invariant $\frac{1}{n}$. Both \cite{SS} and \cite{Sh} allow the level structures at $p$ to be arbitrary.

In this paper, we will concentrate on another special class of Shimura varieties which are related to $GL_2$ (more precisely, the inner and outer forms of $GL_2$). These varieties were already studied previously by Reimann in \cite{Re1, Re2, Re3, Re4} and by Kottwitz \cite{Ko2} (the case $n=2$ there). Let $D$ be a quaternion division algebra over a totally real number field $F$, and $D^\times$ be the associated multiplicative group over $\Q$. After fixing some suitable CM extension $K|F$, one can associate a unitary group $G$ over $\Q$. For any open compact subgroup $C\subset D^\times(\A_f)$, we have the quaternionic Shimura variety $Sh_{D,C}$, which is projective. Unless $D$ is totally indefinite, these varieties are not of PEL type. To study their reduction modulo $p$ and cohomology, in \cite{Re1, Re3} Reimann introduced some PEL Shimura varieties $Sh_{G,\ov{C}}$ for the unitary group $G$ and some related open compact subgroup $\ov{C}\subset G(\A_f)$. By the theory of connected components of Shimura varieties, the varieties $Sh_{D,C}$ can be considered as open and closed subvarieties of some suitable Galois twists of $Sh_{G,\ov{C}}$. Let $p$ be a prime which is unramified in $F$. When $C$ has the form \[C^pC_p\subset D^\times(\A_f^p)\times D^\times(\Q_p)\] with $C_p$ maximal, in loc. cit. Reimann defined some integral models of $Sh_{G,\ov{C}}$ by proposing suitable moduli problems. Then he defined the integral models of $Sh_{D,C}$ by using Galois twists and taking suitable connected components as on generic fibers. By studying the local structures and reduction modulo $p$ of these models, Reimann can determine the local semisimple Hasse-Weil zeta function of $Sh_{D,C}$ as a local semisimple automorphic $L$-function for the above form of $C$ (and also for the case that $C_p\subset D^\times(\Q_p)$ is the Iwahori subgroup, cf. \cite{Re4}). His results generalized the previous works of Langlands \cite{Lan} (in the case $D_{\Q_p}^\times$ unramified) and Rapoport \cite{Ra1} (in the case $D$ is totally indefinite, $p$ is inert in $F$ and $D_{\Q_p}$ is a local quaternion algebra). In this paper we would like to generalize Reimann's results to arbitrary levels at $p$ as in the works of \cite{SS} and \cite{Sh}.

In fact, we can modify the group $G$ a little to get another unitary group $G'$ associated to the CM extension $K|F$ and $D$. For the related levels $\ov{C}'$ to $C$ (see section 2 for precise meaning), we have the Shimura varieties $Sh_{G',\ov{C}'}$, which are the simple Shimura varieties studied by Kottwitz in \cite{Ko2} for $n=2$. It turns out we can treat this case all together with the quaternionic case. The key new point is that here we can take arbitrary $p$, e.g. $D$ can be ramified at the primes of $F$ above $p$ so that the local reductive group $G'_{\Q_p}$ is not quasi-split, therefore not included in \cite{Ko2} or \cite{SS}. Due to the non quasi-split assumption for the reductive groups at $p$, the set of Kottwitz triples is not enough for parameterizing the points on these varieties over finite fields, contrary to the unramified case of \cite{Ko1} and quasi-split case \cite{Sch3}. Here as \cite{Ra2}, \cite{Re1} and \cite{Sh}, we use the original approach of Kottwitz as in \cite{Ko0} to get suitable combinatorial description of the points over finite fields. As in \cite{SS} and \cite{Sh}, having the description of the set of points of reduction modulo $p$ on these varieties, the crucial ingredient is to define some suitable test functions at $p$ which will appear in the trace formula when analyzing the cohomology. There are two cases.
\begin{itemize}
\item For the unitary case, these functions are already available: if $D$ is split at a place $\nu$ above $p$, the factor at $\nu$ was defined in \cite{Sch3} (the EL case $n=2$); if $D$ is ramified at a place $\nu$ above $p$, the factor at $\nu$ was defined in \cite{Sh} (for $n=2$) which in turn was based the approach of Scholze \cite{Sch2, Sch3} by deformation spaces of $p$-divisible groups.
\item For the quaternionic case, one needs just to take the Galois twist into consideration and the method of Scholze applies.\end{itemize}
Then as in \cite{Sh}, the next tasks are to prove
\begin{itemize}
\item some vanishing results for these test functions so that they admit transfers as functions on $D^\times(\Q_p)$ or $G'(\Q_p)$ (cf. \cite{Ra2} conjecture 5.7 and \cite{Ra3} conjecture 10.2),
\item and these transfer functions satisfy some suitable character identities as \cite{SS} conjecture 7.1.
\end{itemize}
For the proofs, we decompose the test functions as products of factors over the places $\nu$ of $F$ above $p$ and consider each factor. Then the split case follows from the results of \cite{SS} and the ramified case follows from the results of \cite{Sh}. At this point, we have to make some assumptions for the cocharacters at these ramified places to put ourselves into the local situation of \cite{Sh}, cf. section 4 for details. Here, in fact, the first point only concerns ramified places.

With these results at hand, one can deduce the desired description of the cohomology. Let $G=D^\times$ or $G'$. Let $l\neq p$ be prime, and $\xi$ be an algebraic $\ov{\Q}_l$-representation of $G$. Then by standard method we can associate $\ov{\Q}_l$-local systems $\mathcal{L}_\xi$ on the Shimura varieties $Sh_{G,C}$ for any open compact subgroup $C\subset G(\A_f)$. Let $E$ be the local reflex field. We are interested in the alternating sum of cohomology
groups
\[H_\xi=\sum_{i}(-1)^i\varinjlim_{C}H^i(Sh_{G,C}\times \ov{\Q}_p,\mathcal{L}_\xi)\]
as a virtual representation of $G(\A_f)\times W_E$. The main theorem is as follows.
Recall the cocharacter $\mu$ associated to the Shimura data gives rise to a representation $r_{\mu}:\, ^L(G_E)\lra GL(V)$ for a finite dimensional $\ov{\Q}_l$-vector space $V$ (cf. \cite{Ko0} 2.1.2).
\begin{theorem}
We have an identity
\[H_\xi=\sum_{\pi_f}a(\pi_f)\pi_f\otimes(r_{\mu}\circ\varphi_{\pi_p}|_{W_E})|-|^{-d/2}\]
as virtual $G(\Z_p)\times G(\A_f^p)\times W_E$-representations. Here $\pi_f$ runs through irreducible admissible representations of $G(\A_f)$, the integer $a(\pi_f)$ is as in \cite{Ko2} p. 657, $\varphi_{\pi_p}$ is the local Langlands parameter associated to $\pi_p$, $d=dimSh_{G,C}$ is the dimension of the Shimura varieties $Sh_{G,C}$ for any open compact subgroup $C\subset G(\A_f)$.
\end{theorem}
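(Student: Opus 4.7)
The plan follows the Langlands--Kottwitz method, adapted to the present setting as in \cite{SS} and \cite{Sh}. The goal is to compute, for a Hecke function $f^p \in C_c^\infty(G(\A_f^p))$ and a sufficiently high power $\tau^j$ of a lift of Frobenius in $W_E$, the trace $\mathrm{tr}(f^p \times \tau^j \mid H_\xi)$, and to identify the answer with the expected automorphic expression. First I would fix integral models of $Sh_{G,C}$ at arbitrary level at $p$, following the strategy of \cite{Sh}: start from Reimann's moduli problems at maximal level, extend to general level along finite \'etale covers, and in the quaternionic case pass through Galois twists of connected components of the ambient unitary model. Then, by applying the Lefschetz--Verdier trace formula to the nearby cycles, $\mathrm{tr}(f^p \times \tau^j \mid H_\xi)$ decomposes as a sum over isogeny classes of $\ov{\F}_p$-points of the special fiber, whose local factor at $p$ is a test function $\phi_{\tau^j}$ constructed from the $\ell$-adic cohomology of deformation spaces of the relevant $p$-divisible groups, in the spirit of \cite{Sch2, Sch3}.

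Next, I would invoke the two bullet-point results recalled in the introduction. The local reductive group factors over the primes $\nu \mid p$ of $F$, and accordingly so does $\phi_{\tau^j}$. At split $\nu$ we apply the vanishing, transfer, and character identities of \cite{SS}; at ramified $\nu$ (under the cocharacter assumptions that put us in the local framework of \cite{Sh}), we apply the corresponding statements from \cite{Sh}. This produces a transferred function $f_{\tau^j} \in C_c^\infty(G(\Q_p))$ whose (twisted) orbital integrals match those of $\phi_{\tau^j}$ on the geometric side, and which on the spectral side satisfies a Scholze--Shin type character identity
\[ \mathrm{tr}\,\pi_p(f_{\tau^j}) \;=\; \mathrm{tr}\bigl((r_\mu \circ \varphi_{\pi_p})|_{W_E}\bigr)(\tau^j) \]
(up to the normalization $|\cdot|^{-d/2}$ and a volume factor) for each $\pi_p$ in the relevant class of admissible representations.

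Since $G_{\Q_p}$ is not quasi-split at the ramified primes $\nu$, Kottwitz triples do not suffice to parameterize the isogeny classes. Here I would follow \cite{Ra2, Re1, Sh} and use Kottwitz's original approach from \cite{Ko0} to obtain the combinatorial description of points over finite fields. Combined with the transfer step above, this rewrites $\mathrm{tr}(f^p \times \tau^j \mid H_\xi)$ as the geometric side of a stable trace formula for $G$ and its essential endoscopic groups, evaluated on $f^p \cdot f_{\tau^j} \cdot f_\infty$ for a suitable archimedean pseudo-coefficient $f_\infty$ associated to $\xi$.

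Finally, applying Kottwitz's pseudo-stabilization as in \cite{Ko2} and comparing with the Arthur--Selberg trace formula converts this geometric expression into a sum over automorphic representations; feeding in the character identity above yields the claimed decomposition $H_\xi = \sum_{\pi_f} a(\pi_f)\, \pi_f \otimes (r_\mu \circ \varphi_{\pi_p}|_{W_E}) |\cdot|^{-d/2}$. The main obstacle is the combined ramified contribution at places $\nu \mid p$ where the local group is non-quasi-split: the construction of $\phi_{\tau^j}$, the vanishing needed for transfer, and the Scholze--Shin character identities must all be verified there, which is where the local results of \cite{Sh} do the essential work. A secondary subtlety, specific to the quaternionic case, is that $Sh_{D,C}$ only appears as a union of Galois twists of connected components of $Sh_{G,\ov{C}}$; this twist must be carried consistently through the point-counting and trace comparisons, which is achieved by realizing the cohomology of the twisted connected components as a direct summand of the cohomology of the ambient PEL Shimura variety.
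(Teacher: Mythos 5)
Your proposal follows essentially the same Langlands--Kottwitz strategy as the paper: Lefschetz trace formula on nearby cycles (Proposition 5.1), test functions $\phi_{\tau,h}$ built from deformation spaces of $p$-divisible groups, vanishing and transfer at the non-quasi-split ramified places via \cite{Sh} together with the split-place results of \cite{SS}, a rewrite in terms of Kottwitz triples, pseudostabilization as in \cite{Ko2}, Matsushima's formula, and finally the Scholze--Shin character identities. One small imprecision worth flagging: you describe the passage to the spectral side as a ``stable trace formula for $G$ and its essential endoscopic groups,'' whereas the whole point of the ``simple'' Shimura variety setup of \cite{Ko2} is that pseudostabilization bypasses endoscopy entirely (one obtains $tr(\tau\times hf^p\mid H_\xi)=N^{-1}tr(f_{\tau,h}f^p\mid H_\xi)$ directly and then invokes Matsushima), so no endoscopic comparison actually appears.
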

In this paper we assume $p$ is unramified in the totally real field $F$ as in the works \cite{Re1,Re2, Re3, Re4}. This assumption will imply the integral models of our Shimura varieties are flat. We remark that the general case which allows the ramification of $p$ should be workable as in \cite{Sch3} by taking flat closure of the generic fibers in the naive integral models.

As a corollary we get the semisimple zeta functions of these Shimura varieties. Our result generalizes the previous works of Reimann \cite{Re1, Re4} to arbitrary levels at $p$. Let $\wt{E}$ be the global reflex field and $\nu$ be a place of $\wt{E}$ above $p$ such that $E=\wt{E}_\nu$.
\begin{corollary}
Let the situation be as in the theorem. Let $C\subset G(\A_f)$ be any sufficiently small compact open subgroup. Then the semisimple local Hasse-Weil zeta function of $Sh_{G,C}$ at the place $\nu$ of $\wt{E}$ is given by
\[\zeta_\nu^{ss}(Sh_{G,C},s)=\prod_{\pi_f}L^{ss}(s-d/2, \pi_p, r_{\mu})^{a(\pi_f)dim\pi_f^C}.\]
\end{corollary}

We briefly describe the content of this article. In section 2, we introduce the quaternionic and related unitary Shimura varieties to be studied. In section 3, we summarize the results of Reimann on integral models of quaternionic Shimura varieties $Sh_{D,C}$ and the unitary Shimura varieties $Sh_{G,\ov{C}}$ and their reduction modulo $p$. We will also treat the simple unitary case $Sh_{G',\ov{C}'}$. In section 4 we define some test functions at $p$ following the method of Scholze for the quaternionic and unitary case and study their key properties. Finally, in section 5 we deduce the cohomology of these Shimura varieties as expected by Langlands and Kottwtiz, and determine their local semisimple zeta functions as products of local semisimple automorphic $L$-functions.\\
 \\
\textbf{Acknowledgments.} I would like to thank Peter Scholze sincerely for his encouragement and suggestions. I want to thank Michael Rapoport for a useful remark. I should also thank the referee for careful reading and valuable suggestions. This work started while the author was a postdoc at the Mathematical Institute of the University of Bonn, and was finished after the author moved to the University of Regensburg. This work was supported by the SFB/TR 45 ``Periods, Moduli Spaces and Arithmetic of Algebraic Varieties'' and the SFB 1085 ``Higher Invariants'' of the DFG.

\section{Some simple Shimura varieties}
In this section we will mainly (but not all) follow the notations of \cite{Re1} section 1. We will also introduce some related unitary Shimura varieties which are special cases of those studied in \cite{Ko2} but not included in the works \cite{Re1, Re2, Re3, Re4}.

Let $D$ be a quaternion division algebra over a totally real number field $F$ and let $\mathcal{Z}$ be the set of infinite places of $F$ at which $D$ is split. We assume $\mathcal{Z}$ is not empty and for each $v\in \mathcal{Z}$, we fix an embedding of $\R$-algebras $\C\subset D_v=D\otimes_{F,v}\R$. Let $D^\times$ denote the reductive group over $\Q$ associated to $D$. We get a homomorphism of algebraic groups over $\R$
\[\begin{split}h_D: \mathbb{S}=\C^\times&\longrightarrow \prod_{v\in\Zm}D_v^\times\subset (D_\R)^\times\\
z&\longmapsto (z|v\in\Zm).\end{split}\]
Let $X$ be the conjugacy class of $h_D$ which does not depend on the choice of embeddings $\C\subset D_v$. $(D^\times, X)$ forms a Shimura datum with reflex field
\[E(D)=\Q(\sum_{v\in\Zm}v(x)|\,x\in\,F)\subset \C.\]
For every open compact subgroup $C\subset D^\times(\A_f)$, let $Sh_{D,C}$ be the projective Shimura variety with level $C$ over $E(D)$. Except for the special case that $\Zm=Hom(F,\R)$, i.e. $D$ is totally indefinite, these Shimura varieties are not of PEL type. To construct their integral models and to study the reduction modulo $p$, we introduce some related Shimura varieties as follows.

Fix a totally imaginary quadratic extension $K|F$ with complex conjugation $c$ and a subset $\V_K$ of $Hom(K,\C)$ such that its restriction to $F$ induces a bijection $\V_K\ra\V:=Hom(F,\R)-\Zm$. For each $v\in\V_K$, fix an isomorphism $K_v\simeq \C$. Consider $K^\times$ as an algebraic group over $\Q$ together with the homomorphism of real algebraic groups
\[\begin{split}h_K: \mathbb{S}&\longrightarrow \prod_{v\in\V_K}K_v^\times\subset (K\otimes\R)^\times\\
z&\longmapsto (z|v\in\V_K).\end{split}\]For every open compact subgroup $C\subset K^\times(\A_f)$ we get a Shimura variety $Sh_{K,C}$ over the reflex field $E(K)$ which is a finite extension of $E(D)$. This variety is a finite reduced scheme with
\[Sh_{K,C}(\ov{\Q})=K^\times(\A_f)/CK^\times.\]
The Galois action of $Gal(\ov{\Q}|E(K))$ on these varieties is understood by class field theory and the reciprocity law map
\[r_{K}: E(K)^\times\lra K^\times.\]

Let $B=D\otimes_FK$ be the semisimple algebra over $K$. We choose a positive involution $\ast$ on $B$ and fix a non-degenerate alternating $\Q$-bilinear form $\psi: B\times B\ra \Q$. Consider the associated unitary similitude group $G$ over $\Q$ consisting of automorphisms of $B$ which preserve $\psi$ up to an $F^\times$-scalar. Then there is an exact sequence for algebraic groups over $\Q$ (cf. \cite{Re1} section 1)
\[1\lra F^\times \lra D^\times\times K^\times \st{\tau}{\lra} G\lra 1.\]Here $F^\times\lra D^\times\times K^\times$ is the embedding $f\mapsto (f,f^{-1})$. In particular, the group $G$ is the Weil restriction of a reductive group defined over $F$. Under the above exact sequence the center of $G$ is isomorphic to $K^\times$. Let $Y$ be the conjugacy class of
\[h_G=\tau_{\R}\circ (h_D\times h_K): \mathbb{S}\longrightarrow G_\R.\] Then $(G,Y)$ defines a Shimura datum with reflex field $E(K)$. For each open compact subgroup $C\subset G(\A_f)$, we have a projective variety $Sh_{G,C}$ over $E(K)$, which is a coarse moduli space of abelian varieties with some additional structures. For the reader's convenience, we review the related moduli problem as follows. To do this, let us first review a definition of \cite{Re1}. In the following, for a number filed $F'$ we will denote by $O_{F'}$ its ring of integers. Since the Galois group $Gal(\ov{\Q}|E(K))$ stabilizes $\V_K$ and $\Zm$, there is a natural decomposition
\[O_K\otimes_\Z O_{E(K)}=O(\Zm)\times O(\V_K)\times \ov{O}(\V_K),\]where
\[\begin{split}
O(\Zm)&=\{ x\in O_K\otimes_\Z O_{E(K)}|\,(\ov{\varphi}\otimes id)(x)=(\varphi\otimes id)(x)=0, \,\tr{if} \,\varphi\in \V_K \},\\
O(\V_K)&=\{x\in O_K\otimes_\Z O_{E(K)}| \,(\varphi\otimes id)(x)=0, \,\tr{if} \,\varphi\in Hom(K,\C)-\V_K\},\\
\ov{O}(\V_K)&=\{ x\in O_K\otimes_\Z O_{E(K)}|\, (\ov{\varphi}\otimes id)(x)=0,  \,\tr{if} \,\varphi\in Hom(K,\C)-\V_K \}.
\end{split}\]
Then if $\mathfrak{F}$ is a sheaf of $O_K\otimes O_{E(K)} $-modules on an $O_{E(K)}$-scheme $S$, the above decomposition induces a corresponding decomposition \[\mathfrak{F}=\mathfrak{F}(\Zm)\times \mathfrak{F}(\V_K)\times \ov{\mathfrak{F}}(\V_K).\]Let $L$ be a CM extension of $F$ which is contained in $D$. By \cite{Re1} definition 2.1, a sheaf of type $(L,\V_K)$ on an $O_{E(K)}$-scheme $S$ is a coherent sheaf of $\mathcal{O}_S$-modules $\mathfrak{F}$ together with a homomorphism \[O_L\otimes_{O_F}O_K\ra End_{\mathcal{O}_S}\mathfrak{F}\] such that locally for the flat topology on $S$ we have
\begin{itemize}
	\item $\mathfrak{F}$ is a free $O_L\otimes_\Z \mathcal{O}_S$-module,
	\item $\mathfrak{F}(\Zm)$ is a free $O_L\otimes_{O_F}O(\Zm)\otimes_{O_{E(K)}} \mathcal{O}_S$-module,
	\item $\ov{\mathfrak{F}}(\V_K)=0$.
\end{itemize}
 Now consider the moduli problem $M_{G,C}$ such that for any locally noetherian $E(K)$-scheme $S$, $M_{G,C}(S)$ is the set of isomorphism classes of $(A,\iota, \Lambda, \ov{\eta})$ where
\begin{itemize}
\item $A$ is a projective abelian scheme over $S$,
\item $\iota: B\ra End(A)\otimes\Q$ is an embedding such that $LieA$ is a sheaf of type $(L,\V_K)$, where $L\subset D$ is a fixed CM extension of $F$ contained in $D$,
\item $\Lambda=\lambda\circ\iota(F^\times)\subset Hom(A, A^\vee)\otimes\Q$ for a polarization $\lambda$ of $A$,
\item $\ov{\eta}$ is a $C$-level structure on $A$.
\end{itemize}
Then $Sh_{G,C}$ is the coarse moduli space for $M_{G,C}$. In fact, $Sh_{G,C}$ represents the \'etale sheafification of the functor $M_{G,C}$ (cf. \cite{Re3} definition 2.2 and proposition 2.14).
The above exact sequence defines a morphism over $E(K)$ for $C_1\times C_2\subset D^\times(\A_f)\times K^\times(\A_f)$ mapping to $C\subset G(\A_f)$
\[(Sh_{D,C_1}\times E(K))\times Sh_{K,C_2}\lra Sh_{G,C},\]
when $C_1, C_2$ vary these morphisms are $D^\times(\A_f)\times K^\times(\A_f)$-equivariant.

For every open compact subgroup $C\subset D^\times(\A_f)$, we define the following notations:
\[\begin{split}&C_F=C\cap F^\times(\A_f),\\
&N_C=min\{N\geq 0|(1+NO_F\otimes\wh{\Z})^\times\subset C_F\},\\
&C_K=C_F(1+N_CO_K\otimes\wh{\Z})^\times,\\
&\ov{C}=\tau_{\A_f}(C\times C_K).\end{split}\]
Then as in \cite{Re1} section 1, for any $C\subset D^\times(\A_f)$ with $N_C\geq 3$, we get a Galois covering
\[(Sh_{D,C}\times E(K))\times Sh_{K,C_K}\lra Sh_{G,\ov{C}}\]with Galois group $F^\times(\A_f)/C_FF^\times$. This yields a constant Galois covering with the same Galois group
\[(Sh_{D,C}\times E(K))\times K^\times(\A_f)/C_KK^\times\lra M_{\ov{C}},\]
where the projective variety $M_{\ov{C}}$ over $E(K)$ is defined as follows. The above morphism $\tau: D^\times\times K^\times \ra G$ identifies $K^\times$ with the center of $G$ and $C_K$ with a subgroup of $\ov{C}$, hence defines an action of $K^\times(\A_f)/C_KK^\times$ on $Sh_{G,\ov{C}}$. Let $L_C$ denote the finite abelian extension of $E(K)$ such that
\[r_K: E(K)^\times(\A_f)\lra K^\times(\A_f)\]induces an injection
\[r_{K,C}: Gal(L_C|E(K))\lra K^\times(\A_f)/C_KK^\times.\]
Then there is an isomorphism $\phi_C$ of $L_C$-schemes such that the following diagram
\[\xymatrix{
M_{\ov{C}}\times L_C\ar[r]^{\phi_C}\ar[d]^{id\times\sigma}& Sh_{G,\ov{C}}\times L_C\ar[d]^{r_{K,C}(\sigma)^{-1}\times\sigma}\\ M_{\ov{C}}\times L_C\ar[r]^{\phi_C}& Sh_{G,\ov{C}}\times L_C
}\]
commutes for every $\sigma\in Gal(L_C|E(K))$. The existence and uniqueness of $M_{\ov{C}}$ follows from descent theory. Let \[\pi_C: M_{\ov{C}}\lra K^\times(\A_f)/F^\times(\A_f)C_KK^\times\]be the projection, then there is a natural isomorphism of $E(K)$-schemes
\[Sh_{D,C}\times E(K)\lra \pi_C^{-1}(1)\subset M_{\ov{C}}.\]We can define an action of $G(\A_f)$ on the varieties $M_{\ov{C}}$ when $C$ varies. Then the above isomorphisms are $D^\times(\A_f)$-equivariant with respect to $\tau$.
In fact one can interpret $M_{\ov{C}}$ as a Shimura variety as follows.
\begin{proposition}
$(G,h_Gh_K^{-1})$ defines a Shimura datum with reflex field $E(D)$, where $h_Gh_K^{-1}: \mathbb{S}\lra G$ is defined by $\tau_\R\circ (h_D\times h_Kh_K^{-1})$. Let $\ov{C}\subset G(\A_f)$ be the open compact subgroup as above, and $Sh^{t}_{G,\ov{C}}$ be the associated Shimura variety with level $\ov{C}$ for the datum $(G,h_Gh_K^{-1})$. Then we have a canonical isomorphism of varieties over $E(K)$
\[M_{\ov{C}}\simeq Sh^{t}_{G,\ov{C}}\times E(K).\]
\end{proposition}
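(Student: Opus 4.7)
The plan is to verify three things in turn: first that $(G, h_G h_K^{-1})$ satisfies Deligne's axioms with reflex field $E(D)$; second that $M_{\ov{C}}\times\C$ and $Sh^{t}_{G,\ov{C}}\times\C$ have the same complex uniformization; third that the Galois descent data over $L_C/E(K)$ coincide, via Deligne's reciprocity law for canonical models.

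For the first step, the key observation is that through $\tau$ the homomorphism $h_K$ factors through the center $Z_G\simeq K^\times$, so $h_G h_K^{-1}=\tau_\R\circ(h_D\times 1)$; hence the conjugacy class $Y^t$ differs from $Y$ only by a central translate. The axioms (SV1)--(SV3) are unaffected because the induced adjoint homomorphism $\mathbb{S}\to G^{ad}_\R$ is unchanged, while the Cartan-involution and weight conditions only involve the adjoint part. The associated cocharacter $\mu^t=\mu_{h_G}\mu_{h_K}^{-1}$ is defined over $E(D)$: comparing via $\tau$ with the cocharacter attached to $h_D$ alone, or using the explicit description of reflex fields in terms of $\Zm$ and $\V_K$, one sees that cancelling $\mu_{h_K}$ removes precisely the $K$-contribution to the field of definition.

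For the second step, both $M_{\ov{C}}\times\C$ and $Sh^{t}_{G,\ov{C}}\times\C$ are identified with $G(\Q)\backslash(Y^t\times G(\A_f)/\ov{C})$, via the $G(\A_f)$-equivariant central-multiplication isomorphism $Y\xrightarrow{\cdot h_K^{-1}}Y^t$. On the $M_{\ov{C}}$ side this identification is forced by the construction from $Sh_{G,\ov{C}}$, while on the $Sh^{t}_{G,\ov{C}}$ side it is the standard complex uniformization.

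The main content is the third step. By Deligne's reciprocity law applied to $(G,h_G h_K^{-1})$, an element $\sigma\in Gal(\ov{\Q}|E(K))\subset Gal(\ov{\Q}|E(D))$ acts on $Sh^{t}_{G,\ov{C}}(\ov{\Q})$ through the element of $G(\A_f)$ produced by the idelic reciprocity of $E(K)$ composed with $\mu^t$, while it acts on $Sh_{G,\ov{C}}(\ov{\Q})$ through the same reciprocity composed with $\mu_{h_G}=\mu^t\cdot\mu_{h_K}$. The quotient of these two actions is therefore a central element of $G(\A_f)$ coming from $\mu_{h_K}$, and unwinding the definition of $r_K:E(K)^\times(\A_f)\to K^\times(\A_f)$ (which is precisely Deligne's reciprocity map for the cocharacter $h_K$), one identifies this central element, modulo $C_K K^\times$, with $r_{K,C}(\sigma)^{-1}$. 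That is exactly the descent datum used to define $M_{\ov{C}}$, so faithfully flat descent yields the claimed canonical isomorphism $M_{\ov{C}}\simeq Sh^{t}_{G,\ov{C}}\times E(K)$.

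The hard part will be the third step: verifying the sign and normalization conventions carefully so that Deligne's reciprocity map for the central cocharacter $\mu_{h_K}$ of $G$ is identified with $r_K$ on the nose, and tracking the finite-level subgroup $C_K$ so that the resulting cocycle lands in $K^\times(\A_f)/C_K K^\times$ precisely as $r_{K,C}(\sigma)^{-1}$ and not a different twist. Once those identifications are pinned down the conclusion follows formally; steps 1 and 2 are then essentially routine.
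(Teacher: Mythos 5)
The paper itself offers no substantive argument for this proposition --- the entire proof is the single sentence ``This is clear from our construction and the basic theory of canonical models of Shimura varieties'' --- so there is no competing route to compare against. Your proposal correctly supplies the content the paper leaves implicit: the simplification $h_G h_K^{-1}=\tau_\R\circ(h_D\times 1)$; the observation that since $h_K$ is central the two conjugacy classes $Y$ and $Y^t$ are $G(\R)$-equivariantly identified by a central translation and that the Deligne axioms are adjoint conditions hence unaffected; the identification of the complex points with $G(\Q)\backslash(Y\times G(\A_f)/\ov C)$; and the reduction of the whole statement to matching descent data over $L_C/E(K)$.

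The one imprecision is in step 3. Deligne's reciprocity law produces an action of $\mathrm{Gal}(\ov\Q|E)$ on $Sh(G,X)(\ov\Q)$ through an element of $G(\A_f)$ only in the $0$-dimensional case (i.e.\ when $G$ is a torus); for a higher-dimensional Shimura variety it governs the Galois action on the set $\pi_0$ of geometric connected components and on CM special points, but not directly on all $\ov\Q$-points. So the phrase ``$\sigma$ acts on $Sh^t_{G,\ov C}(\ov\Q)$ through the element of $G(\A_f)$ produced by $\ldots\mu^t$'' is not literally correct. The fix is standard: either run the comparison on CM special points (for a special pair $(h,g)\in Y^t\times G(\A_f)$ with $h$ landing in a maximal $\Q$-torus $T\supset Z_G$, the point $(h\cdot h_K,g)\in Y\times G(\A_f)$ is special for the same $T$, and the two torus reciprocity maps $r_{(T,h)}$ and $r_{(T,h\cdot h_K)}$ differ exactly by $r_K$ via $K^\times\hookrightarrow T$), which uniquely pins down the canonical model; or invoke the general fact that canonical models of $(G,X)$ and $(G,X\cdot z)$, for a central $z$ coming from a torus Shimura datum, differ by the twist given by that torus's reciprocity law. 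Either way the descent cocycle one extracts is exactly $r_{K,C}(\sigma)^{-1}$ modulo $C_K K^\times$, matching the defining datum of $M_{\ov C}$, so the conclusion stands. You might also note that the two twists of $Sh_{G,\ov C}$ do indeed trivialize over the common field $L_C$ (because $r_{K,C}$ factors through $\mathrm{Gal}(L_C|E(K))$), which is what licenses the Galois-descent comparison in the first place.
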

\begin{proof}
This is clear from our construction and the basic theory of canonical models of Shimura varieties.
\end{proof}

We would like to introduce a further class of related Shimura varieties. These are of special cases studied by Kottwitz in \cite{Ko2}. Consider the norm map of algebraic groups over $\Q$
\[\begin{split}G&\lra F^\times\\ (g,z)&\longmapsto Nm(g)zz^c,\end{split}\]
where $Nm: D^\times \ra F^\times$ is the reduced norm. We consider $\Q^\times(=\G_m)\subset F^\times(=Res_{F|\Q}\G_m)$, and let $G'\subset G$ be the inverse image of $\Q^\times$ under the above norm map. It is also the group of $B$-module automorphisms of $B$ which preserve $\psi$ up to a $\Q^\times$-scalar. The center of $G'$ is $K^\times\cap G'$. The morphism $h_G$ factors through $G'$ and defines \[h_{G'}: \mathbb{S}\lra G'_{\R}.\]$G'$ and the conjugacy class of $h_{G'}$ then define a Shimura datum with reflex field $E(K)$. We note that since $G'$ is associated to a quaternion algebra, it satisfies the Hasse principle for $H^1(\Q,G')$, cf. \cite{Ko1} section 7. For any open compact subgroup $C\subset G'(\A_f)$, we have a Shimura variety $Sh_{G',C}$ over $E(K)$, which is a moduli space of abelian varieties with some additional structures when $C$ is sufficiently small. For any locally noetherian $E(K)$-scheme $S$, $Sh_{G',C}(S)=\{(A,\iota,\lambda,\ov{\eta})\}/\simeq$ with $(A,\iota,\ov{\eta})$ the same as in the case of $Sh_{G,C}$ and $\lambda$ is a principal polarization of $A$. For any open compact subgroup $C\subset D^\times(\A_f)$, we have defined the open compact subgroup $\ov{C}\subset G(\A_f)$. Let $\ov{C}'=\ov{C}\cap G'(\A_f)$, which is an open compact subgroup of $G'$. We have a morphism of varieties over $E(K)$
\[Sh_{G',\ov{C}'}\lra Sh_{G, \ov{C}}.\]
In fact, the Shimura varieties $Sh_{D, C},\, Sh_{G, \ov{C}},\, Sh_{G',\ov{C}'}$ have isomorphic geometric irreducible components, and we can also use $Sh_{G',\ov{C}'}$ to study the geometry of $Sh_{D,C}$.

\section{Integral models and points of reduction modulo $p$}
We will define some integral models of the Shimura varieties introduced in last section. Since our ultimate goal is to study their $\ell$-adic cohomology, we will describe the points of reduction modulo $p$ of these varieties. For the Shimura varieties $Sh_{D, C}$ and $Sh_{G, \ov{C}}$, we summarize the results of sections 2-7 of \cite{Re1}. We note that the authors of \cite{TX} have also studied the geometry of these Shimura varieties, but there they made some additional assumptions so that they got smooth integral models. Here we allow arbitrary ramifications, and the integral models obtained are all normal, projective and flat.

Let $p$ be a prime which is unramified in $F$. The primes of $F$ above $p$ are denoted by $\nu_1,\dots,\nu_m$. Assume that each $\nu_i$ splits in $K$ as $\tilde{\nu}_i,\tilde{\nu}_i^c$. Fix a prime $\vp$ of $E(K)$ above $p$, and let $E=E(K)_\vp$. By abuse of notation, the base changes of $Sh_{D, C},\, Sh_{G, \ov{C}},\, Sh_{G',\ov{C}'}$ over $E$ are still denoted by the same notation. Let $O_E$ be the integer ring of $E$. Throughout this set, $C$ is of the form $C=C^pC_p^0\subset D^\times(\A_f^p)\times D^\times(\Q_p)$ with $C_p^0$ the maximal open subgroup of $D^\times(\Q_p)$ associated to a maximal order $O_D\subset D_{\Q_p}$. We are going to define some integral models of these varieties over $O_E$ and describe their points of reduction modulo $p$. We require the conditions (2.6)-(2.9) in section 2 of \cite{Re1} for the choices of $L, K, \V_K,$ and $\psi$ hold true.

Let us begin by the easy case of $Sh_{G',\ov{C}'}$. We assume $C^p$ is sufficiently small so that $C$ and $\ov{C}'$ are sufficiently small (actually the $C^p$ such that $N_C\geq 3$ will be enough). Corresponding to $O_D$ we have a maximal oder $O_B\subset B_{\Q_p}$. Then there is a projective $O_E$-scheme $S_{G',\ov{C}}$ such that for each locally noetherian connected $O_E$-scheme $S$, $S_{G',\ov{C}'}(S)$ is the set of isomorphism classes of objects $(A,\iota,\lambda,\ov{\eta})$ where
\begin{itemize}
\item $A$ is a projective abelian scheme over $S$,
\item $\iota: O_B\ra End(A)$ is an embedding such that $LieA$ is a sheaf of type $(L,\V_K)$,
\item $\lambda$ is a $p$-principal polarization of $A$,
\item $\ov{\eta}$ is a $(\ov{C}')^p$-level structure on $A$.
\end{itemize}
Since we assume $p$ is unramified in $F$, the theory of local models tells us that the scheme $S_{G',\ov{C}}$ is flat over $O_E$ (cf. \cite{G} theorem 4.6.1). Note the local reductive group has the form
\[G'_{\Q_p}\simeq \prod_{i=1}^mB_{\tilde{\nu}_i}^\times\times\G_m\simeq\prod_{i=1}^mD_{\nu_i}^\times\times \G_m.\]

To define the integral model of $Sh_{G,\ov{C}}$, one can consider the integral version $\mathcal{M}_{G,\ov{C}}$ of the moduli problem $M_{G,\ov{C}}$. More precisely, for any locally noetherian connected $O_E$-scheme $S$, $\mathcal{M}_{G,\ov{C}}(S)$ is the set of isomorphism classes of $(A,\iota, \Lambda, \ov{\eta})$ where
\begin{itemize}
\item $A$ is a projective abelian scheme over $S$,
\item $\iota: O_B\ra End(A)\otimes\Z_{(p)}$ is an embedding such that $LieA$ is a sheaf of type $(L,\V_K)$,
\item $\Lambda=\lambda\circ\iota(F^\times)\subset Hom(A, A^\vee)\otimes\Q$ for a $p$-principal polarization $\lambda$ of $A$,
\item $\ov{\eta}$ is a $\ov{C}^p$-level structure on $A$.
\end{itemize} By \cite{Re1} proposition 2.14, there is a coarse moduli scheme $S_{G,\ov{C}}$ over $O_E$ for the moduli problem $\mathcal{M}_{G,\ov{C}}$, which is projective flat.
In fact $S_{G,\ov{C}}$ represents the \'etale sheafification of the moduli problem $\M_{G,C}$ (cf. remark 2.17 of \cite{Re1}). It is important for us to note that there is an action of $K^\times(\Q_p)$ on $S_{G,\ov{C}}$, which extends the action of $K^\times(\Q_p)$ on $Sh_{G,\ov{C}}$. This action can be described as follows. An element $k\in K^\times(\Q_p)$ sends a point $(A,\iota,\Lambda,\ov{\eta})$ to $(A,\iota,\Lambda,\ov{\eta k})$. The local reductive group has the form
\[G_{\Q_p}=\prod_{i=1}^mG_i,\]where for each $1\leq i\leq m$ the group $G_i$ is defined by the following exact sequence of reductive groups over $\Q_p$
\[1\lra F_{\nu_i}^\times\lra D_{\nu_i}^\times\times K_{\tilde{\nu}_i}^\times\times K_{\tilde{\nu}^c_i}^\times\lra G_i\lra 1.\]
In particular, after fixing isomorphisms $K^\times_{\wt{\nu_i}}\simeq K^\times_{\wt{\nu_i}^c}\simeq F^\times_{\nu_i}$, we can identity $G_i$ with the group $D_{\nu_i}^\times\times F_{\nu_i}^\times$. Let $\ov{\F}_p$ be a fixed algebraic closure of the finite field $\F_p$. In \cite{Re1} Reimann computed the strict complete local rings $\wh{O}_x$ for any $\ov{\F}_p$-point $x$ of $S_{G,\ov{C}}$, and proved the scheme $S_{G,\ov{C}}$ is normal.

The reciprocity law $r_K$ of the finite Shimura varieties $Sh_{K,C_K}$ gives rise to a continuous homomorphism
\[r_{K,p}: E^\times \lra K^\times(\Q_p).\]Let $k_D=r_{K,p}(p)$, then it acts on $S_{G,\ov{C}}$. This element can be described explicitly as \[k_D=((p^{e_1},1),\cdots,(p^{e_m},1))\in K^\times(\Q_p)=\prod_{i=1}^m(K_{\wt{\nu_i}}^\times\times K^\times_{\wt{\nu_i}^c})\simeq\prod_{i=1}^m(F_{\nu_i}^\times\times F_{\nu_i}^\times),\]where for $1\leq i\leq m$, the integers $e_i$ are defined as in lemma 5.9 of \cite{Re1}. Let $Fr\in Gal(\Q_p^{nr}|E)$ denote the Frobenius automorphism and $\mathcal{M}_{\ov{C}}$ the normal projective flat $O_E$-scheme which is, up to isomorphism, uniquely determined by the existence of an isomorphism $\phi_C$ of $\Z_p^{nr}$-schemes such that the following diagram commutes:
\[\xymatrix{
\M_{\ov{C}}\times \Z_p^{nr}\ar[r]^{\phi_C}\ar[d]^{id\times Fr}& S_{G,\ov{C}}\times \Z_p^{nr}\ar[d]^{k_D\times Fr}\\ \M_{\ov{C}}\times \Z_p^{nr}\ar[r]^{\phi_C}& S_{G,\ov{C}}\times \Z_p^{nr}
}\]
In fact $\M_{\ov{C}}$ and $S_{G,\ov{C}}$ become isomorphic to each other after a finite unramified extension $\Z_{p^d}$ of $\Z_p$, as some finite power of $k_D$ will act trivially on $Sh_{K,C_K}$ and thus on $S_{G,\ov{C}}$. Here $\Z_p^{nr}$ (resp. $\Z_{p^d}$) is the integer ring of the maximal (resp. degree $d$) unramified extension of $\Q_p$.
When $C^p$ varies, there is a continuous action of $K^\times(\Q_p)\times G(\A_f^p)$ on the tower $\M_{\ov{C}}$. And we have natural $K^\times(\Q_p)\times G(\A_f^p)$-equivariant isomorphisms of varieties over $E$
\[\M_{\ov{C}}\times E\simeq M_{\ov{C}}\times E.\]
The scheme $\M_{\ov{C}}$ is normal. We have a $K^\times(\A_f)$-equivariant morphism of $O_E$-schemes
\[\pi_C: \M_{\ov{C}}\lra K^\times(\A_f)/F^\times(\A_f)C_KK^\times.\]We define an integral model of $Sh_{D, C}$ by
\[S_{D,C}:=\pi_C^{-1}(1)\subset \M_{\ov{C}}.\]
When $C^p$ varies, we get a tower of normal projective flat $O_E$-schemes $S_{D,C}$ with an action of $D^\times(\A_f^p)$. The local reductive group for this case is
\[ D^\times_{\Q_p}=\prod_{i=1}^mD^\times_{\nu_i}.\]

We want a description of the set of $\ov{\F}_p$-points on $S_{G,\ov{C}}$. In \cite{Re1} definition 4.7, there is a set $\I_D$ depending on $D$, which is roughly the set of all isogeny types of the abelian varieties with additional structures. For each $\vep\in \I_D$, there is a reductive group $G_\vep$ over $\Q$ and a set $Y_\vep$ together with an automorphism $Fr_\vep: Y_\vep\ra Y_\vep$ such that the subset $S_{G,\ov{C}}(\vep)\subset S_{G,\ov{C}}(\ov{\F}_p)$ consisting of elements of isogeny type $\vep$ can be written as
\[G_\vep(\Q)\setminus Y_\vep\times G(\A_f^p)/\ov{C}^p,\]with the Frobenius $Fr$ acts via the automorphism $Fr_\vep$ of $Y_\vep$. In fact, let $x_0=(A,\iota,\Lambda,\ov{\eta})\in S_{G,\ov{C}}(\ov{\F}_p)$ be a fixed point of isogeny type $\vep$. One can take $G_\vep=Aut(A,\iota,\Lambda)$. Let $N=D(A[p^\infty])_\Q$ be the rational covariant Dieudonn\'e module of the associated $p$-divisible group, which is a $B$-module equipped with a perfect hermitian form. Let $\mathcal{F}$ denote the Frobenius automorphism on $N$. Then $Y_\vep$ is the set of all $O_B$-invariant lattices $M\subset N$ such that $pM\subset \mathcal{F}M\subset M$, $\mathcal{F}M/pM$ is an $O_L\otimes O_K\otimes\ov{\F}_p$-module of type $(L,\V_K)$, and there exists a perfect paring on $M$. In other words, $Y_\vep$ parameterizes all the Dieudonn\'e modules with additional structures coming from the points in $S_{G,\ov{C}}(\ov{\F}_p)$ which are $p$-isogenous to $x_0$. After fixing a lattice of the form $O_B\otimes W(\ov{\F}_p)$, we can identity $Y_\vep$ with a subset of $G(\mathcal{K})/G(O_{\mathcal{K}})$ where $\mathcal{K}=W(\ov{\F}_p)_\Q$, and $O_{\mathcal{K}}=W(\ov{\F}_p)$. Here $G(O_{\mathcal{K}})$ is defined by the integral model of $G$ associated to $O_B$. The actions of $G_\vep(\Q)$ on $Y_\vep$ and $G(\A_f^p)/\ov{C}^p$ occurring in the above quotient are defined as follows. There is an element $b=b(\vep)\in G(\mathcal{K})$ associated to $\vep$ well defined up to $\sigma$-conjugacy, such that the Frobenius $\mathcal{F}$ acts on $N$ as $b\sigma$. Here, as usual $\sigma$ is the Frobenius morphism on $G(\mathcal{K})$. Then we can define a reductive group $J_b$ over $\Q_p$ such that \[J_b(\Q_p)=\{g\in G(\mathcal{K})|\,(b\sigma)g=g(b\sigma)\}.\]Then $J_b(\Q_p)$ acts on $Y_\vep$. We have an embedding \[G_\vep(\Q)\subset J_b(\Q_p)\] which gives the action of $G_\vep(\Q)$ on $Y_\vep$. On the other hand, we have an embedding \[G_\vep(\A_f^p)\subset G(\A_f^p),\]together with the natural embedding $G_\vep(\Q)\subset G_\vep(\A_f^p)$, giving the action of $G_\vep(\Q)$ on $G(\A_f^p)/\ov{C}^p$. We have
\[S_{G,\ov{C}}(\ov{\F}_p)\simeq\coprod_{\vep\in\I_D}G_\vep(\Q)\setminus Y_\vep\times G(\A_f^p)/\ov{C}^p.\]When the level $C^p$ varies, the above isomorphism is $K^\times(\Q_p)\times G(\A_f^p)$-equivariant.

For each $\vep\in \I_D$, there is a reductive group $H_\vep$ over $\Q$ and a set $X_\vep$ together with an automorphism $Fr_{\vep}: X_\vep\ra X_\vep$ such that the subset $S_{D,C}(\vep)$ of $\vep$-isogeny type points can be written as
\[H_\vep(\Q)\setminus X_\vep\times D^\times(\A_f^p)/C^p,\]with the Frobenius $Fr$ acts via the automorphism $Fr_{\vep}$ of $X_\vep$. For the definition of $H_\vep$, see definition 4.6 of \cite{Re1}. The set $X_\vep$ is defined as \[\{d\in D^\times(\mathcal{K})|d^{-1}d_\vep\sigma(d)\in Y_D\}/(O_D\otimes W(\ov{\F}_p))^\times,\]where 
\begin{itemize}
\item $Y_D$ is as in definition 5.1 of \cite{Re1}: it is set of all $d\in D^\times (\mathcal{K})$ such that $d^{-1}$ is contained in $O_D\otimes W(\ov{\F}_p)$ and for every $v\in Hom(F,\mathcal{K})\simeq Hom(F,\R)$, 
\[ord_p\Big((v\otimes id_{\mathcal{K}})\big(Nm(d)\big)\Big)=\left\{\begin{array}{ll}
-1& \tr{if}\, v\in \Zm,\\
0 & \tr{if} \, v\in \V.
\end{array}  \right.\]
\item $d_\vep$ is a fixed choice of element defined by lemma 5.2 of \cite{Re1}.
\end{itemize}
The set $X_\vep$ can also be defined as a set of some suitable Dieudonn\'e lattices, cf. \cite{Re1} lemma 5.10. Similar to the above, there exists a reductive group $J_b$ over $\Q_p$ with $J_b(\Q_p)$ acts on $X_\vep$. The group actions occurring in the quotient are defined through embeddings $H_\vep(\Q)\subset J_b(\Q_p)$ and $H_\vep(\Q)\subset H_\vep(\A_f^p)\subset D^\times(\A_f^p)$.
We have an $F^\times(\Q_p)\times D^\times(\A_f^p)$-equivariant bijection (cf. \cite{Re1} theorem 6.6 (ii))
\[S_{D,C}(\ov{\F}_p)\simeq \coprod_{\vep\in\I_D}H_\vep(\Q)\setminus X_\vep\times D^\times(\A_f^p)/C^p.\]
In fact, this is deduced from the description of the set $S_{G,\ov{C}}(\ov{\F}_p)$ above in the following way: by lemma 4.10 of loc. cit., there is an exact sequence of reductive groups over $\Q$
 \[1\lra F^\times\lra H_\vep\times K^\times\lra G_\vep\lra 1,\] and by lemma 5.9 of \cite{Re1} there is a $G_\vep(\Q_p)$-equivariant bijection
 \[Y_\vep\simeq F^\times(\Q_p)\setminus\big( X_\vep\times K^\times(\Q_p)\big)/(O_K\otimes\Z_p)^\times,\]
 where an element $f\in F^\times(\Q_p)$ acts on $X_\vep$ via its natural embedding into $H_\vep(\Q_p)$ and on $K^\times(\Q_p)$ as multiplication by $f^{-1}$. The Frobenius $Fr_\vep$ on the left hand side acts as $Fr_\vep\times k_D^{-1}$ on the right hand side. Therefore,
 for each $\vep\in \I_D$ we can rewrite $S_{G,\ov{C}}(\vep)$ as
\[\begin{split}S_{G,\ov{C}}(\vep)&\simeq \big(H_\vep(\Q)\times K^\times\big)\setminus \Big(F^\times(\Q_p)\setminus\big( X_\vep\times K^\times(\Q_p)\big)/(O_K\otimes\Z_p)^\times\Big)\times G(\A_f^p)/\ov{C}^p\\ &\simeq F^\times(\A_f)\setminus\Big(\big(H_\vep(\Q)\setminus X_\vep\times D^\times(\A_f^p)/C^p\big)\times K^\times(\A_f)/C_KK^\times\Big),\end{split}\]with the Frobenius $Fr_\vep$ acts via the endomorphism $Fr_\vep\times k_D^{-1}$ of $X_\vep\times K^\times(\Q_p)$.

Now we describe the set $S_{G',\ov{C}'}(\ov{\F}_p)$. There is a reductive group $I_\vep$ over $\Q$ and a set $Z_\vep$ together with an automorphism $Fr_{\vep}: Z_\vep\ra Z_\vep$ such that the subset $S_{G',\ov{C}'}(\vep)$ of points of isogeny type of $\vep$ can be written as
\[I_\vep(\Q)\setminus Z_\vep\times G'(\A_f^p)/(\ov{C}')^p,\]with the Frobenius acts via the automorphism $Fr_\vep$ of $Z_\vep$. The group $I_\vep$ and the set $Z_\vep$ are defined similarly to the case of $S_{G,\ov{C}}$. As always, after fixing some suitable basis we can identify $Z_\vep$ with a subset of $G'(\mathcal{K})/G'(O_{\mathcal{K}})$. Also, there exists a reductive group $J_b$ over $\Q_p$ with $J_b(\Q_p)$ acts on $Z_\vep$. The group actions occurring in the quotient are defined through embeddings $I_\vep(\Q)\subset J_b(\Q_p)$ and $I_\vep(\Q)\subset I_\vep(\A_f^p)\subset G'(\A_f^p)$. We have the description of $\ov{\F}_p$-points
\[S_{G',\ov{C}'}(\ov{\F}_p)\simeq \coprod_{\vep\in\I_D}I_\vep(\Q)\setminus Z_\vep\times G'(\A_f^p)/(\ov{C}')^p.\]
\begin{proposition}Under the above notations, we have a natural morphism of schemes over $O_E$
\[S_{G',\ov{C}'}\lra S_{G,\ov{C}},\]which induces a map on the sets of $\ov{\F}_p$-points
$f: S_{G',\ov{C}'}(\ov{\F}_p)\lra S_{G,\ov{C}}(\ov{\F}_p).$ Under this map we have equalities for each $\vep\in\I_D$
\[S_{G',\ov{C}'}(\vep)=f^{-1}(S_{G,\ov{C}}(\vep)).\]Moreover, $S_{G',\ov{C}'}$ is normal.
\end{proposition}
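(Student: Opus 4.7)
The plan is threefold: (i) define the morphism by comparing moduli problems, (ii) observe that the isogeny type partition depends only on the data common to both problems, and (iii) transfer normality from $S_{G,\ov{C}}$ by showing the forgetful map is étale enough at the level of complete local rings.

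For (i), I would argue entirely from the moduli descriptions recalled above. Given an $S$-point $(A,\iota,\lambda,\ov{\eta})$ of $S_{G',\ov{C}'}$, I set $\Lambda:=\lambda\circ\iota(F^\times)$ and push the level structure forward along the inclusion $(\ov{C}')^p\subset\ov{C}^p$ to obtain a tuple $(A,\iota,\Lambda,\ov{\eta}')$ classified by $\mathcal{M}_{G,\ov{C}}(S)$. The Lie-type condition $(L,\V_K)$ is identical on both sides, and the $p$-principal condition on $\lambda$ is exactly what is required for $\Lambda$. Functoriality and the universal property of coarse moduli then yield the morphism $f\colon S_{G',\ov{C}'}\to S_{G,\ov{C}}$ over $O_E$, extending the evident morphism on generic fibers.

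For (ii), Reimann's definition (loc.\ cit.\ 4.7) of the index set $\I_D$ and of the stratification $S_{G,\ov{C}}(\vep)$ refers only to the $B$-isogeny class of the underlying pair $(A,\iota)$, which $f$ visibly preserves. Thus $y\in S_{G',\ov{C}'}(\vep)$ forces $f(y)\in S_{G,\ov{C}}(\vep)$, and conversely any $y\in S_{G',\ov{C}'}(\ov{\F}_p)$ whose image lies in $S_{G,\ov{C}}(\vep)$ has underlying $(A,\iota)$ in isogeny class $\vep$ and hence lies in $S_{G',\ov{C}'}(\vep)$. Both inclusions are immediate.

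For (iii), I fix $y\in S_{G',\ov{C}'}(\ov{\F}_p)$ with image $x=f(y)$. By Serre--Tate and Grothendieck--Messing, $\wh{O}_y$ is controlled by the deformation problem of $A[p^\infty]$ equipped with its $O_B\otimes\Z_p$-action and a principal polarization. Since $p$ is unramified in $F$, this deformation problem decomposes as a product over $\nu_1,\dots,\nu_m$ and at each $\nu_i$ is precisely the local problem that Reimann used to compute $\wh{O}_x$ for $S_{G,\ov{C}}$: the only discrepancy between the two sides is the passage from the $F^\times$-orbit $\Lambda$ to an explicit $p$-principal $\lambda\in\Lambda$ and the level refinement $(\ov{C}')^p\subset\ov{C}^p$, both of which are étale in nature (the former is a torsor under an unramified torus, the latter under a finite group). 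Therefore $\wh{O}_y$ coincides with $\wh{O}_x$ after at worst a finite étale base change, and Reimann's normality result for $S_{G,\ov{C}}$ propagates to $S_{G',\ov{C}'}$. The main obstacle is the bookkeeping in this last step: one must check carefully that the polarization-type and level-structure discrepancy introduces no new vertical ramification, which is exactly where the hypotheses that $p$ is unramified in $F$ and that $\lambda$ is $p$-principal enter in an essential way.
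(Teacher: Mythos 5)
Your steps (i) and (ii) match the paper exactly: the morphism is defined on moduli problems by $(A,\iota,\lambda,\ov\eta)\mapsto(A,\iota,\Lambda,\ov\eta)$ with $\Lambda=\lambda\circ\iota(F^\times)$, and the compatibility with the isogeny stratification is immediate from the definitions. Where you diverge is step (iii). The paper simply cites Reimann's Proposition 2.14, whose content is that the morphism factors as a finite \'etale quotient of $S_{G',\ov{C}'}$ followed by an embedding into $S_{G,\ov{C}}$; normality of $S_{G',\ov{C}'}$ is then an immediate consequence of normality of $S_{G,\ov{C}}$. You instead try to re-derive the conclusion by comparing complete local rings via Serre--Tate. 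This is a reasonable alternative strategy, but your argument as written has two problems.

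First, the phrase ``a torsor under an unramified torus'' is not what you want: a torsor under a positive-dimensional group would change dimension and could not be part of a quasi-finite morphism between schemes of the same dimension, let alone be finite \'etale. The passage from the orbit $\Lambda$ to a representative $\lambda$ is governed by a discrete (finite, after imposing the level) ambiguity, not a torus. Second, and more seriously, you assert that the deformation problem at $y$ is ``precisely'' the one Reimann used for $\wh{O}_x$, but then also that there are ``discrepancies'' to track, which is internally inconsistent. The genuine subtlety, which your write-up does not address, is that $S_{G,\ov{C}}$ is only a \emph{coarse} moduli scheme (the \'etale sheafification of $\M_{G,\ov{C}}$), so its strict complete local rings are not a priori the universal deformation rings; one has to pass through the comparison with a fine moduli problem. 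That comparison is exactly the content of Reimann's Proposition 2.14 --- the finite \'etale quotient statement --- which the paper invokes and which your argument would need to reprove. So the overall plan is salvageable, but the key lemma is missing rather than sketched: you need to establish that a finite \'etale quotient of $S_{G',\ov{C}'}$ embeds into $S_{G,\ov{C}}$, and this does not follow merely from observing that the deformation problems at $p$ are built from the same $p$-divisible group with $O_B\otimes\Z_p$-action.
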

\begin{proof}
We have the natural morphism of functors $S_{G',\ov{C}'}\lra \M_{G,\ov{C}}$, which sends $(A,\iota,\lambda,\ov{\eta})$ to $(A,\iota,\Lambda,\ov{\eta})$. Moreover, it induces an embedding of a finite \'etale quotient of $S_{G',\ov{C}'}$ into $\M_{G,\ov{C}}$, cf. the proof of proposition 2.14 in \cite{Re1}. Now the statements in the proposition are clear.
\end{proof}

For any $\vep\in\I_D$, the sets $S_{G',\ov{C}'}(\vep),\,S_{G,\ov{C}}(\vep)$ can be also understood from the point of view of the associated Rapoport-Zink spaces, \cite{RZ}. Namely, the set $Z_\vep$ can be considered as the set of $\ov{\F}_p$-points of the Rapoport-Zink space associated to the local PEL data (cf. \cite{RZ} 3.18) \[(B_{\Q_p},\ast,V=B_{\Q_p},\psi, O_{B_{\Q_p}}, \Lambda=O_{B_{\Q_p}}, b,\{\mu\}),\] where $b\in G'(\mathcal{K})$ is an element up to $\sigma-G'(O_\mathcal{K})$ conjugacy defined by the $p$-divisible group associated to a fixed point $x\in S_{G',\ov{C}'}(\vep)$, $\{\mu\}$ is the conjugacy class of $\ov{\Q}_p$-cocharacters of $G'$ with field of definition $E$, defined from the morphism $h_{G'}$ in the Shimura datum. The case for $Y_\vep$ is similar, cf. \cite{BZ}. In fact, the sets $Z_\vep$ and $Y_\vep$ are examples of affine Deligne-Lusztig varieties in our setting, which generally occur in the description of mod $p$ points of Shimura varieties.

We are more interested in the Shimura varieties $Sh_{D,C},\, Sh_{G',\ov{C}'}$. To compute their $\ell$-adic cohomology, we need also to understand the points on them over finite fields. Fix an integer $j\geq 1$. Let $\kappa_E$ be the residue field of $E$, and set $r=j[\kappa_E:\F_p]$. Since we assume that $p$ is unramified in $F$, the extension $E|\Q_p$ is also unramified.  We begin by recall the result of \cite{Re1} for the case of $Sh_{D,C}$.
\begin{proposition}
There is a bijection
\[S_{D,C}(\F_{p^r})\simeq \coprod_{\vep\in\I_D}\coprod_{\veps}H_{\vep,\veps}(\Q)\setminus X_p(\delta)\times X^p(\gamma),\]
where $\veps$ runs over the subset of conjugacy classes of $H_\vep(\Q)/(F^\times\cap C)$ which admit a $d_0$ as in lemma 7.4 of \cite{Re1} (this implies the following sets $X_p(\delta)\neq \emptyset$), $H_{\vep,\veps}$ is the centralizer of $\veps$ in $H_\vep$, $\delta\in D^\times(\Q_{p^r})$ is the element well defined up to $\sigma$-conjugacy such that $p\delta$ is defined from $\veps$ by lemma 7.4 of loc. cit., $\gamma\in D^\times(\A_f^p)$ is defined from $\veps$ by the embedding $H_{\vep}(\A_f^p)\subset D^\times(\A_f^p)$, $X_p(\delta)$ and $X^p(\gamma)$ are defined as follows
\[\begin{split}X_p(\delta)&=\{x\in X_\vep|\,Fr_\vep^jx=\veps x\}\\
&=\{d\in D^\times(\Q_{p^r})|\,d^{-1}\delta\sigma(d)\in Y_D\}/(O_D\otimes \Z_{p^r})^\times,\\
X^p(\gamma)&=\{d\in D^\times(\A_f^p)|\,d^{-1}\gamma d\in C^p\}/C^p.\end{split}\]
\end{proposition}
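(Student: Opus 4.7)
The plan is to compute $S_{D,C}(\F_{p^r})$ as the set of fixed points of the $j$-th power of the geometric Frobenius $Fr$ of $\kappa_E$ acting on $S_{D,C}(\ov{\F}_p)$ -- recall $r=j[\kappa_E:\F_p]$ -- and then to unpack the result using the $\ov{\F}_p$-description
\[S_{D,C}(\ov{\F}_p)\simeq\coprod_{\vep\in\I_D}H_\vep(\Q)\setminus X_\vep\times D^\times(\A_f^p)/C^p\]
recalled above from \cite{Re1} Theorem 6.6(ii). On the right-hand side the Frobenius acts via $Fr_\vep$ on $X_\vep$ and trivially on the second factor, so the problem reduces to making the $Fr^j$-stable classes explicit and regrouping them.

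\textbf{Step 1 (Unwinding the fixed-point condition).} A pair $(x,g)\in X_\vep\times D^\times(\A_f^p)$ represents a $Fr^j$-fixed class if and only if there exist $\veps\in H_\vep(\Q)$ and $c\in C^p$ with $Fr_\vep^j x=\veps x$ and $\veps g c=g$, i.e. $g^{-1}\veps g\in C^p$. Changing the representative by $h\in H_\vep(\Q)$ replaces $\veps$ by $h\veps h^{-1}$, so only the $H_\vep(\Q)$-conjugacy class of $\veps$ is intrinsic, and central contributions from $F^\times\cap C$ act trivially on the whole condition and can be quotiented out, producing exactly the index set in the statement. For a fixed representative $\veps$, the residual symmetry on the pair $(x,g)$ is the centralizer $H_{\vep,\veps}(\Q)$, which explains the outer quotient.

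\textbf{Step 2 (Prime-to-$p$ and $p$-adic factors).} Setting $\gamma\in D^\times(\A_f^p)$ to be the image of $\veps$ under $H_\vep(\A_f^p)\subset D^\times(\A_f^p)$, the prime-to-$p$ factor is tautologically $\{g\in D^\times(\A_f^p)\mid g^{-1}\gamma g\in C^p\}/C^p=X^p(\gamma)$. For the $p$-adic factor, one uses the Dieudonn\'e-theoretic description $X_\vep\simeq\{d\in D^\times(\mathcal{K})\mid d^{-1}d_\vep\sigma(d)\in Y_D\}/(O_D\otimes W(\ov{\F}_p))^\times$ together with Reimann's explicit formula for $Fr_\vep$: the condition $Fr_\vep^j x=\veps x$ forces a representative $d$ to lie in $D^\times(\Q_{p^r})$, and applying \cite{Re1} Lemma 7.4 to $\veps$ produces an element $\delta\in D^\times(\Q_{p^r})$, well-defined up to $\sigma$-conjugacy by the allowed ambiguity in the auxiliary $d_0$, such that the fixed-point equation translates into $d^{-1}\delta\sigma(d)\in Y_D$. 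This is exactly the second description of $X_p(\delta)$, equivalent to the first by construction, and the precondition that $\veps$ admit such a $d_0$ is precisely the nonemptiness of $X_p(\delta)$ imposed in the statement.

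Assembling the two steps yields the asserted decomposition. The step requiring real care -- essentially the content of \cite{Re1} Lemma 7.4 and the argument of its Theorem 7.5 -- is the passage from the abstract equation $Fr_\vep^j x=\veps x$ on $X_\vep$ to the explicit parametrization by $d\in D^\times(\Q_{p^r})$ with $d^{-1}\delta\sigma(d)\in Y_D$, and the verification that the $\sigma$-conjugacy class of $\delta$ is a well-defined invariant of the conjugacy class of $\veps$; I would invoke Reimann's results directly for these assertions rather than reproduce them, as the rest of the argument is merely the orbit-decomposition of a Frobenius fixed-point set in a double coset space.
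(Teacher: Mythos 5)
Your proof is correct and takes the same route as the paper, which simply cites Reimann's Proposition 7.7 for this result; your unwinding of the Frobenius fixed-point condition on the double-coset description $\coprod_\vep H_\vep(\Q)\setminus X_\vep\times D^\times(\A_f^p)/C^p$, with the genuinely hard $p$-adic step deferred to Reimann's Lemma 7.4, is precisely the content of that proposition. Nothing essentially new is added or needed.
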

\begin{proof}
This is included in proposition 7.7 of \cite{Re1}.
\end{proof}
The case for the unitary Shimura variety $Sh_{G',\ov{C}'}$ is similar. Recall that associated to a point $x\in S_{G',\ov{C}'}(\F_{p^r})$, we have the $c$-polarized virtual abelian variety with additional structures $(A,\iota,\lambda)$. The associated $p$-divisible group $H$ gives us an element $\delta\in G'(\Q_{p^r})$, well defined up to $\sigma$-conjugation by $G'(\Z_{p^r})$. It satisfies the equality $\kappa_{G'_{\Q_p}}(p\delta)=\mu^\sharp$, where $\kappa_{G'_{\Q_p}}$ is the Kottwitz map, for its definition and the $\mu^\sharp$ see \cite{Ko5}. $\delta$ only depends on the isogeny type of $x$.
\begin{proposition}
There is a bijection
\[S_{G',\ov{C}'}(\F_{p^r})\simeq \coprod_{\vep\in\I_D}\coprod_{\veps}I_{\vep,\veps}(\Q)\setminus Z_p(\delta)\times Z^p(\gamma),\]
where $\veps$ runs over the set of conjugacy classes of $I_\vep(\Q)/(Z(\Q)\cap \ov{C}')$ ($Z\subset G'$ is the center which can be viewed a subgroup of $I_\vep$), $I_{\vep,\veps}$ is the centralizer of $\veps$ in $I_\vep$, $\delta\in G'(\Q_{p^r})$ is the element well defined up to $\sigma$-conjugacy from $\veps$ as explained below, $\gamma\in G'(\A_f^p)$ is defined from $\veps$ by the embedding $I_{\vep}(\A_f^p)\subset G'(\A_f^p)$, $Z_p(\delta)$ and $Z^p(\gamma)$ are defined as follows (to define them, one needs not to know what $\delta$ and $\gamma$ are, only $\vep$ and $\veps$ will suffice for their definitions)
\[\begin{split}Z_p(\delta)&=\{x\in Z_\vep|\,Fr_\vep^jx=\veps x\}=\{x\in Z_\vep|\,(p\delta\sigma)^rx=\veps x\},\\
Z^p(\gamma)&=\{g\in G'(\A_f^p)|\,g^{-1}\gamma g\in (\ov{C}')^p\}/(\ov{C}')^p.\end{split}\]
When identifying $Z_\vep\subset G'(\mathcal{K})/G'(O_{\mathcal{K}})$, we have $Z_p(\delta)\subset G'(\Q_{p^r})/G'(\Z_{p^r})$.
\end{proposition}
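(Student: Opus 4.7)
The plan is to run the standard Langlands--Kottwitz fixed-point count on the $\ov{\F}_p$-point description of $S_{G',\ov{C}'}$ established earlier in this section, in complete parallel to proposition 7.7 of \cite{Re1} for the quaternionic case. Concretely, $S_{G',\ov{C}'}(\F_{p^r})$ is the fixed-point set of $Fr^r$ acting on
\[S_{G',\ov{C}'}(\ov{\F}_p)\simeq\coprod_{\vep\in\I_D}I_\vep(\Q)\setminus Z_\vep\times G'(\A_f^p)/(\ov{C}')^p,\]
where $Fr$ acts via the automorphism $Fr_\vep$ on $Z_\vep$ and trivially on $G'(\A_f^p)/(\ov{C}')^p$. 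So I first restrict attention to one isogeny stratum $\vep$ and count classes $[(z,g)]$ with $(Fr_\vep^j z,g)$ equivalent to $(z,g)$ modulo $I_\vep(\Q)$.

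Next, I unpack this equivalence: a class is $Fr^r$-fixed if and only if there exists $\veps\in I_\vep(\Q)$ such that $Fr_\vep^j z=\veps z$ and $g^{-1}\veps g\in(\ov{C}')^p$. The first condition places $z$ in the subset $Z_p(\delta)=\{z\in Z_\vep\mid Fr_\vep^j z=\veps z\}$; the second says $g\in Z^p(\gamma)$ with $\gamma$ the image of $\veps$ in $G'(\A_f^p)$. The element $\veps$ is determined only up to replacement by $\veps'=i\veps i^{-1}$ for $i\in I_\vep(\Q)$ in the stabilizer of $[(z,g)]$, and by $z\cdot I_\vep(\Q)$-translation one may further multiply $\veps$ by any element of $Z(\Q)\cap \ov{C}'$ without changing the class. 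Grouping the fixed classes by the $I_\vep(\Q)/(Z(\Q)\cap\ov{C}')$-conjugacy class of $\veps$, the residual quotient acting on $Z_p(\delta)\times Z^p(\gamma)$ is precisely the centralizer $I_{\vep,\veps}(\Q)$, which yields the asserted bijection.

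It remains to identify the local datum $\delta\in G'(\Q_{p^r})$ arising from $\veps$. Here I would use the embedding $I_\vep(\Q)\subset J_b(\Q_p)\subset G'(\mathcal{K})$ and the Dieudonn\'e description of $Z_\vep$: fixing $z\in Z_p(\delta)\subset G'(\mathcal{K})/G'(O_\mathcal{K})$, the condition $(p\delta\sigma)^r z=\veps z$ determines $\delta\in G'(\Q_{p^r})$ up to $\sigma$-conjugacy by $G'(\Z_{p^r})$. The compatibility with the Kottwitz map $\kappa_{G'_{\Q_p}}(p\delta)=\mu^\sharp$ is a consequence of the condition $pM\subset \mathcal{F}M\subset M$ together with the type $(L,\V_K)$ condition defining $Z_\vep$, exactly as in Kottwitz's unramified analysis \cite{Ko1} transported to this setting via \cite{RZ}; the required identity then reads $Z_p(\delta)=\{x\in Z_\vep\mid (p\delta\sigma)^r x=\veps x\}$.

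The main technical obstacle is the bookkeeping with the center: one must verify that modding out $I_\vep(\Q)$ by $Z(\Q)\cap \ov{C}'$ exactly absorbs the ambiguity in the choice of $\veps$ coming from central level units, without double-counting. This is where the argument uses that $G'\subset G$ has center $K^\times\cap G'$ and that $(\ov{C}')^p=\ov{C}^p\cap G'(\A_f^p)$, so the centralizer $I_{\vep,\veps}$ really is a reductive $\Q$-group with the correct adelic volumes. Once that is settled, the rest is a direct translation of Reimann's argument in \cite{Re1} \S 7 (which treated $S_{D,C}$) to the principally polarized unitary moduli problem, using only that $G'$ satisfies the Hasse principle for $H^1(\Q,G')$, a fact already recalled in section 2.
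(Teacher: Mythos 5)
Your overall strategy is the same as the paper's: run the Langlands--Kottwitz fixed-point count on the already-established description of $S_{G',\ov{C}'}(\ov{\F}_p)$, stratum by stratum over $\vep\in\I_D$, and group fixed classes by the conjugacy class of the element $\veps$ that witnesses the fixed-point condition. The derivation of $\delta$ from $\veps$ via the Dieudonn\'e description of $Z_\vep$ is also correct and matches what the paper does.

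However, the step you yourself flag as ``the main technical obstacle'' is precisely where your argument has a gap, and the justification you offer does not resolve it. To group fixed classes by the conjugacy class of $\veps$ in $I_\vep(\Q)/(Z(\Q)\cap\ov{C}')$ and to get the residual stabilizer to be exactly $I_{\vep,\veps}(\Q)$ without overcounting, one needs two non-trivial facts about $I_\vep$ and the level $\ov{C}'$ (these are the conditions recorded in lemma~5.3 of \cite{Mi}, and implicitly in \S4 of \cite{Ra2}): first, that any $\veps\in I_\vep(\Q)$ which actually fixes a point of $Z_\vep\times G'(\A_f^p)/(\ov{C}')^p$ must lie in $Z(\Q)\cap\ov{C}'$; second, that $I_\vep(\Q)^{\mathrm{der}}\cap Z(\Q)\cap\ov{C}'=\{1\}$. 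Neither of these is a formal consequence of $G'$ having center $K^\times\cap G'$, nor of $(\ov{C}')^p=\ov{C}^p\cap G'(\A_f^p)$, nor of the Hasse principle for $H^1(\Q,G')$ --- the facts you invoke. They are established by Langlands' argument (lemma~3.6 of \cite{Lan}, cf.\ also lemma~5.5 of \cite{Mi}), which crucially uses that $I_\vep(\R)$ is compact modulo center; and the paper then also observes that $Z(\Q)$ is discrete in $Z(\A_f)$, so for sufficiently small $\ov{C}'$ one even has $Z(\Q)\cap\ov{C}'=\{1\}$, under which both conditions can be arranged. Your proof needs to import (or reprove) these facts; citing the Hasse principle, which plays no role in this particular fixed-point argument, is a red herring. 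Also, you should say explicitly that $\veps$ with $S_{G',\ov{C}'}(\vep,\veps)=\emptyset$ are discarded and that $\delta$ is well-defined only up to $\sigma$-conjugacy as the point moves in its isogeny class --- the proposition's phrasing ``as explained below'' depends on this.
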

\begin{proof}
By lemma 5.3 of \cite{Mi} or the method of \cite{Ko0} 1.4, this proposition follows from the description of $S_{G',\ov{C}'}(\ov{\F}_p)$ and the following facts for sufficiently small $\ov{C}'$ (which were assumed implicitly in section 4 of \cite{Ra2}):
\begin{enumerate}
\item if $\veps\in I_\vep(\Q)$ fixes a point of $Z_\vep\times G'(\A_f^p)/(\ov{C}')^p$, then $\veps\in Z(\Q)\cap \ov{C}'$;
\item $I_\vep(\Q)^{der}\cap Z(\Q)\cap \ov{C}'=\{1\}$.
\end{enumerate}
Indeed, the proof of lemma 3.6 in \cite{Lan} works here, since one only uses the property that $I_\vep(\R)$ is compact modulo center and the assumptions in the above conditions. See also lemma 5.5 of \cite{Mi}. Note for the unitary group $G'$, $Z(\Q)$ is discrete in $Z(\A_f)$. Hence for $\ov{C}'$ sufficiently small we have $Z(\Q)\cap\ov{C}'=\{1\}$. We can and we will take $C^p$ sufficiently small such that both the above two points and $Z(\Q)\cap\ov{C}'=\{1\}$ hold true.

Now we explain the $\delta$ associated to $\veps$. Let $S_{G',\ov{C}'}(\vep,\veps)$ be the term $I_{\vep,\veps}(\Q)\setminus Z_p(\delta)\times Z^p(\gamma)$, which we assume non empty. Then any point $x$ in this set will give a $\delta$ as in the paragraph before this proposition. As $S_{G',\ov{C}'}(\vep,\veps)$ is the set of isogeny class of the abelian variety with additional structures $(A,\iota,\lambda)$, when $x$ varies in this set, $\delta$ moves in its $\sigma$-conjugacy class. For those $\veps\in I_\vep(\Q)$ such that $S_{G',\ov{C}'}(\vep,\veps)=\emptyset$, we just throw them away.
\end{proof}

Actually in section 5 we will use some slightly stronger versions of the above propositions 3.2 and 3.3, in the sense that we will consider the fixed points sets of the Frobenius composed with a prime to $p$ Hecke correspondence. Then Kottwitz's method in \cite{Ko0} still works. There will be only slightly modifications for the sets $X^p(\gamma)$ and $Z^p(\gamma)$ above, cf. the proof of proposition 5.1.

\section{Test functions at $p$}
We want to introduce some test functions at $p$ which will appear in the trace formula for the related group actions on the cohomology of Shimura varieties. We will follow the method of Scholze as in \cite{Sch3}. Let the notations be as in the last section. We consider the unitary Shimura varieties first.

Recall the local reductive group
\[G'_{\Q_p}\simeq \prod_{i=1}^mD_{\nu_i}^\times\times\G_m.\]For each $1\leq i\leq m$, let $G_i$ denote the reductive group over $\Q_p$ defined by $D_{\nu_i}$. Then there are two cases: if $D$ splits at $\nu_i$, then $G_i=Res_{F_{\nu_i}|\Q_p}GL_2$; otherwise, if $D$ ramifies at $\nu_i$, $G_i$ is the inner form of $Res_{F_{\nu_i}|\Q_p}GL_2$ defined by the local quaternion algebra $D_{\nu_i}$. Let $\{\mu_i\}$ be the conjugacy class of cocharacters $\mu_i: \G_m\lra G_{i\ov{\Q}_p}$ induced from $\mu$. We have also the component $\mu_0$ of $\mu$ corresponding to the factor $\G_m$. Fix an isomorphism $\C\simeq\ov{\Q}_p$ which induces a bijection \[Hom(F,\R)=Hom(F,\C)\lra \coprod_{i=1}^mHom(F_{\nu_i},\ov{\Q}_p).\]Recall the subset $\mathcal{Z}\subset Hom(F,\R)$ of infinite places at which $D$ is split. By abuse of notation, let $\mathcal{Z}$ denote also its image under the above bijection. For each $1\leq i\leq m$, we define \[d_i=\sharp\mathcal{Z}\cap Hom(F_{\nu_i},\ov{\Q}_p),\]then clearly $d=\sum_{i=1}^md_i$ with $d=\sharp\mathcal{Z}$. One sees easily that these integers $d_i$ determine the corresponding cocharacters $\mu_i$. Now we have to make the following assumption to apply our results in \cite{Sh}:
\[\textit{for}\;1\leq i\leq m,\; \textit{if}\; D\;\textit{ramifies at}\;\nu_i,\; \textit{then}\;d_i\leq 1.\]

Let $Frob$ be a fixed geometric Frobenius in the Weil group $W_E$. Let $j\geq 1$ be a fixed integer and $r=j[\kappa_E:\F_p]$. Fix $1\leq i\leq m$. For $\tau\in Frob^jI_E\subset W_E$, $h_i\in C_c^\infty(G_i(\Z_p))$ with values in $\Q$, we have a well defined function $\phi_{\tau,h_i}\in C_c^\infty(G_i(\Q_{p^r}))$. Indeed, for the case that $D$ splits at $\nu_i$, $\phi_{\tau,h_i}$ is defined in \cite{Sch3} section 4; for the case $D$ ramifies at $\nu_i$, $\phi_{\tau,h_i}$ is defined in \cite{Sh} section 4 (if $d_i=0$ in this case, the function $\phi_{\tau,h_i}$ does not depend on $\tau$ and was denoted by $\phi_{h_i}$ in \cite{Sh}). Let $h_0\in C_c^\infty(\Z_p^\times)$ with values in $\Q$, and $\phi_{\tau,h_0}$ be the function defined in proposition 4.10 of \cite{Sch3}. Now consider the function in $C_c^\infty(G'(\Z_p))$ \[h=h_1\times\cdots\times h_m\times h_0,\]and define
\[\phi_{\tau,h}:=\phi_{\tau,h_1}\times \cdots\times \phi_{\tau,h_m}\times \phi_{\tau,h_0},\]
which is a well defined function in $C_c^\infty(G'(\Q_{p^r}))$. When $\tau, h$ as above vary, these functions $\phi_{\tau,h}$ are our test functions at $p$ for the unitary Shimura varieties $Sh_{G',\ov{C}'}$.

Now we consider the quaternion case. We have the local reductive group
\[D^\times_{\Q_p}\simeq \prod_{i=1}^mD_{\nu_i}^\times.\]Similar to the above, $D_{\nu_i}^\times$ is either $Res_{F_{\nu_i}|\Q_p}GL_2$ or the inner form of $Res_{F_{\nu_i}|\Q_p}GL_2$, depending on $D$ whether splits at $\nu_i$ or not. We will be interested in the open compact subgroups $C_p$ of $D^\times(\Q_p)$ of the form \[C_p=\prod_{i=1}^mC_{p,i},\,C_{p,i}\subset D_{\nu_i}^\times(\Q_p).\]In this case, we have also the conjugacy class of cocharacters $\{\mu\}$ defined from $h_D$ in the Shimura datum, and the local conjugacy classes $\{\mu_i\}$ for $1\leq i\leq m$. Actually these $\mu_i$ are the same as those in the unitary case. In particular we have also the integers $d_i$. To define the test functions for the Shimura varieties $Sh_{D,C}$, we will use the coarse moduli spaces $S_{G,\ov{C}}$. Recall the local reductive group
 \[G_{\Q_p}=\prod_{i=1}^m(D_{\nu_i}^\times\times F_{\nu_i}^\times).\]For each $1\leq i\leq m$, let $\wt{C_{p,i}}=C_{p,i}\times O_{F_{\nu_i}}^\times\subset D^\times_{\nu_i}\times F_{\nu_i}^\times$ and $\wt{C_p}=\prod_{i=1}^m\wt{C_{p,i}}=C_p\times F^\times(\Z_p)$.
Let $\tau\in Frob^jI_E\subset W_E$ and $Frob,\,j,r$ be as above. Let $h\in C_c^\infty(D^\times(\Z_{p}))$ with values in $\Q$. It extends to the function $\wt{h}=h\times1\in C_c^\infty(G(\Z_p))$, where the second factor is the constant function 1 on $F^\times(\Z_p)$. We fix an open compact subgroup of the form $C=C^pC_p^0\subset D^\times(\A_f^p)\times D^\times(\Q_p)$ with $C_p^0\subset D^\times(\Q_p)$ maximal.
\begin{definition}
Let $\delta\in D^\times(\Q_{p^r})$. Define
\[\phi_{\tau,h}(\delta)=0\]unless $\delta$ is associated to some $\vep\in\I_D$ and $\veps\in H_\vep(\Q)/(F^\times\cap C)$. In the latter case, let $H=A[p^\infty]$ be the $p$-divisible group associated to the abelian variety $A$ over $\ov{\F}_p$ attached to $x$ by considering the inclusion $S_{D,C}(\F_{p^r})\subset S_{D,C}(\ov{\F}_p)\subset \M_{\ov{C}}(\ov{\F}_p)\simeq S_{G,\ov{C}}(\ov{\F}_p)$. Then define
\[\phi_{\tau,h}(\delta)=tr(\tau k_D^j\times \wt{h}|H^\ast(X_{\ul{H},\wt{C_p}}\times \C_p,\Q_l)),\]
for any normal open compact pro-$p$-open subgroup $C_p\subset C_p^0$ such that $h$ is bi-$C_p$-invariant.
Here $k_D$ is the element as defined in section 2, $X_{\ul{H},\wt{C_p}}$ is the level $\wt{C_p}$ cover of the rigid generic fiber over $\mathcal{K} (=\wh{\Q_p^{nr}})$ of $Def_{\ul{H}}\simeq Spf\wh{O_{S_{G,\ov{C}},x}}$, the deformation space of $H$ as $p$-divisible group with additional structures parameterized by $\M_{G,\ov{C}}$. The tower of spaces $X_{\ul{H},\wt{C_p}}$ is equipped with a twisted Galois action of $Gal(\Q_p^{nr}/E)$, such that $\sigma\in Gal(\Q_p^{nr}/E)$ acts as $\sigma k_D^{v(\sigma)}$, where $v(\sigma)=[(\Q_p^{nr})^\sigma:E]$.
\end{definition}
Here in the above definition we have followed the convention of \cite{Sch3} that $H^\ast$ means the alternating sum of cohomology groups. We note that when $j$ is large, the twisting factor $k_D^j$ in the definition of $\phi_{\tau,h}$ disappears, since the schemes $\M_{\ov{C}}$ and $S_{G,\ov{C}}$ are isomorphic to each other after a finite unramified extension of $\Z_p$, see the definition of $\M_{\ov{C}}$ in section 3. This definition may seem somehow unnatural, but it is a direct translation of the method of \cite{Sch3} in our quaternionic setting. We will investigate these functions in more details later, and we will see that they can be defined actually in a more natural way, cf. proposition 4.3.
\begin{proposition}
The function $\phi_{\tau,h}: D^\times(\Q_{p^r})\lra \Q_l$ is well defined and takes values in $\Q$. It is locally constant with compact support. For $h=h_1\times\cdots\times h_m$ with $h_i\in C_c^\infty(D_{\nu_i}^\times(\Z_p))$ takes values in $\Q$ for $1\leq i\leq m$, we have a decomposition
\[\phi_{\tau,h}=\phi_{\tau,h_1}\times\cdots\times\phi_{\tau,h_m}\]
with some functions $\phi_{\tau,h_i}\in C_c^\infty(D_{\nu_i}^\times(\Q_{p^r}))$ defined in a similar way to $\phi_{\tau,h}$.
\end{proposition}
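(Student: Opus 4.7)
The plan is to verify the four claims (well-definedness, rationality, local constancy with compact support, and the product decomposition) by transporting the arguments of Scholze in \cite{Sch3} and of the author in \cite{Sh} to the present quaternionic setting, using the identification $\M_{\ov{C}}\times\Z_p^{nr}\simeq S_{G,\ov{C}}\times\Z_p^{nr}$ and the product structure of the local $p$-divisible group.

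First I would check that the recipe producing $\phi_{\tau,h}(\delta)$ is independent of the choices involved. Given a $\delta\in D^\times(\Q_{p^r})$ arising from some $(\vep,\veps)$, any two lifts $x,x'\in S_{D,C}(\F_{p^r})$ of the associated isogeny class yield abelian varieties with additional structure whose $p$-divisible groups are isomorphic as objects with the relevant local data; hence the deformation spaces $\mathrm{Def}_{\ul{H}}$ and their rigid generic covers $X_{\ul{H},\wt{C_p}}$ are canonically isomorphic together with their twisted $W_E$-actions, so the trace in Definition 4.1 does not depend on the lift. Independence of $C_p$ among normal open compact pro-$p$-subgroups in which $h$ is bi-invariant follows because passing to a smaller level multiplies the cohomology by a finite group whose trace is absorbed into $\tr(\wt h|\cdot)$ in the standard way; this is exactly the argument of \cite{Sch3} reproduced verbatim.

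Second, to see that the values lie in $\Q$, are locally constant in $\delta$, and have compact support, I would follow \cite{Sch3} section 4 and \cite{Sh} section 4: rationality and independence of $\ell$ come from comparing the trace on the $\ell$-adic nearby cycles of the local integral models with an $\ell$-independent formal expression (this uses that the test spaces $X_{\ul{H},\wt{C_p}}$ are the rigid generic fibers of formal deformations of fixed $p$-divisible groups, whose cohomology has an action of $W_E$ by a standard formalism). Local constancy on $D^\times(\Q_{p^r})$ is smoothness of the $(G(\A_f^p)\times G(\Q_p))$-action on the moduli tower, together with the fact that $\wt h\in C_c^\infty(G(\Z_p))$. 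Compact support is geometric: only finitely many $\sigma$-conjugacy classes $[\delta]$ can arise from points of the (quasi-)compact $S_{G,\ov{C}}(\ov{\F}_p)$ meeting the fixed $C$-level, so the support is contained in a union of finitely many compact open $\sigma$-conjugacy classes.

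Finally, for the product decomposition I would use that the $p$-divisible group $H$ attached to a point of $S_{G,\ov{C}}$ splits canonically as $H=\prod_{i=1}^{m}H_i$ according to the decomposition $O_B\otimes\Z_p=\prod_i O_{B_{\wt\nu_i}}$, so that $\mathrm{Def}_{\ul{H}}\simeq\prod_i\mathrm{Def}_{\ul{H_i}}$ as formal $\mathrm{Spf}\, O_{\mathcal K}$-schemes and $X_{\ul{H},\wt{C_p}}\simeq\prod_i X_{\ul{H_i},\wt{C_{p,i}}}$. The K\"unneth formula gives a corresponding tensor product decomposition of $H^\ast(X_{\ul{H},\wt{C_p}}\times\C_p,\Q_\ell)$ compatible with the diagonal action of $\tau k_D^j\times\wt h$, and taking alternating traces yields the desired product formula; the individual factors $\phi_{\tau,h_i}$ so obtained agree by construction with those from \cite{Sch3} in the split case and with those from \cite{Sh} in the ramified case. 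The main technical point will be to check that the explicit form of $k_D$ recorded in section 3, namely $k_D=((p^{e_i},1))_i$, interacts correctly with the product decomposition so that the twisted $W_E$-action factors through the product; this is the only step where the hypothesis that $p$ is unramified in $F$ and the precise recipe for $e_i$ from \cite{Re1} lemma 5.9 enters in an essential way.
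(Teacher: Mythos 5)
Your proposal is correct and follows essentially the same route as the paper. The paper's own proof is very terse: it observes that the twisted Galois action on $S_{G,\ov C}\times\Z_p^{nr}$ defining $\M_{\ov C}$ coincides with the \emph{natural} Galois action after a finite unramified base change (this is the one sentence that makes the whole of \cite{Sch3} \S 4 transfer verbatim), and then records the decomposition $A[p^\infty]=\bigoplus_i\bigl(A[\pi_{\wt\nu_i}^\infty]\oplus A[\pi_{\wt\nu_i^c}^\infty]\bigr)$ to get the product formula. You arrive at the same conclusion by unpacking Scholze's argument directly (well-definedness from $\sigma$-conjugacy invariance of the deformation space, rationality/local constancy/compact support from \cite{Sch3}/\cite{Sh}, and K\"unneth for the factorization). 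One small imprecision worth fixing: the decomposition of $O_B\otimes\Z_p$ has $2m$ factors, $\prod_i\bigl(O_{B_{\wt\nu_i}}\times O_{B_{\wt\nu_i^c}}\bigr)$, and it is the pair $A[\pi_{\wt\nu_i}^\infty]\oplus A[\pi_{\wt\nu_i^c}^\infty]$ (together with the polarization pairing the two) that yields the $i$-th deformation factor, which then gets identified with the deformation space of a single $D_{\nu_i}$-module $p$-divisible group as the paper does just after the proposition. It would also help to state explicitly the paper's observation that the twisting by $k_D$ becomes trivial after a finite unramified base change, since this is what lets you cite Scholze's well-definedness and finiteness arguments without re-proving them under the twisted action.
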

\begin{proof}
By construction, the twisted Galois action on $S_{G,\ov{C}}\times \Z_p^{nr}$ which defines $\M_{\ov{C}}$ coincides with the natural Galois action after base changing $S_{G,\ov{C}}$ over a finite extension of $O_E$. Therefore one sees easily that the arguments of Scholze in \cite{Sch3} section 4 apply to our situation. We only remark that as in loc. cit., the last statement is related to the decomposition of $p$-divisible groups
\[A[p^\infty]=\bigoplus_{i=1}^m(A[\pi_{\wt{\nu_i}}^\infty]\oplus A[\pi_{\wt{\nu_i}^c}^\infty]),\]
where for a place $\wt{\nu}$ of $K$ above $p$, $\pi_{\wt{\nu}}$ is a uniformizer for the local field $K_{\wt{\nu}}$, and $A$ is an ableian variety associated to an $\ov{\F}_p$-point on $S_{G,\ov{C}}$. Under the decomposition $K^\times(\Q_p)=\prod_{i=1}^m(K_{\wt{\nu_i}}^\times\times K_{\wt{\nu_i}^c}^\times)$, the element $k_D$ can be written as $(k_1,\dots,k_m)$. The functions $\phi_{\tau,h_i}$ are defined using deformation spaces of $A[\pi_{\wt{\nu_i}}^\infty]\oplus A[\pi_{\wt{\nu_i}^c}^\infty]$ and the factors $k_i$ in a similar way to $\phi_{\tau,h}$.
\end{proof}

For $j$ large, the factors $k_i^j$ disappears in the definition of $\phi_{\tau,h_i}$ in the last proposition. We would like to say more about these test functions. To compare test functions for the groups $G'$ and $D^\times$, we will introduce upper subscripts in the notation. For $D$ splits or ramifies at $\nu_i$, and the above $\tau$ and $h_i$ we have the test functions $\phi_{\tau,h_i}^{G'}$ as in the unitary case by using deformation spaces of these $p$-divisible groups with additional structures. Let $x\in S_{G,\ov{C}}(\ov{\F}_p)$ come from a point in $S_{D,C}(\F_{p^r})$ and $(A,\iota,\Lambda,\ov{\eta})$ be the associated abelian variety with additional structure. This means that $(A,\iota,\Lambda,\ov{\eta})$ and $\sigma^r(A,\iota,\Lambda,\ov{\eta k_D^j})$ define the same point of $S_{G,\ov{C}}(\ov{\F}_p)$. Let $H_i=A[\pi_{\wt{\nu_i}}^\infty]$ under the above decomposition of the associated $p$-divisible group. We write $\ul{H_i}=(H_i,\iota_i)$ with the induced action $\iota_i: O_{D_{\nu_i}}\lra End(H_i)$. Then for any open compact subgroup $C_{p,i}\subset D_{\nu_i}^\times(\Q_p)$, we have an isomorphism of rigid analytic spaces over $\mathcal{K}$
\[X_{\ul{H_i}\oplus\ul{H_i}^D,\wt{C_{p,i}}}\simeq X_{\ul{H_i},C_{p,i}},\]where both sides are cover spaces of
the generic fibers of deformation spaces of the corresponding $p$-divisible groups with additional structures. By definition, for any $\delta_i\in D^\times_{\nu_i}(\Q_p)$ coming from the $p$-divisible group $\ul{H_i}$, the test function $\phi_{\tau,h_i}^{D^\times}$ is defined by \[\begin{split}\phi_{\tau,h_i}^{D^\times}(\delta_i)&=tr(\tau k_i^j\times \wt{h_i}|H^\ast(X_{\ul{H_i}\oplus\ul{H_i}^D,\wt{C_{p,i}}}\times\C_p,\Q_l))\\&=tr(\tau (k_i')^j\times h_i|H^\ast(X_{\ul{H_i},C_{p,i}}\times\C_p,\Q_l)),\end{split}\]where in the second equality $k_i'=p^{e_i}\in K^\times_{\wt{\nu_i}}=F^\times_{\nu_i}$ is the element introduced in section 3. For $j$ large, the twisting factor $(k_i')^j$ disappears and we have
\[\phi_{\tau,h_i}^{D^\times}=\phi_{\tau,h_i}^{G'}.\]
In the general case, we have the following proposition.
\begin{proposition}
For any $\delta\in D^\times_{\nu_i}(\Q_{p^r})$ coming from a $p$-divisible group, the twisted orbital integrals of
$\phi_{\tau,h_i}^{D^\times}$ and $\phi_{\tau,h_i}^{G'}$ are the same
\[TO_{\sigma\delta}(\phi_{\tau,h_i}^{D^\times})=TO_{\sigma\delta}(\phi_{\tau,h_i}^{G'}).\]
\end{proposition}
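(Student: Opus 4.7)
The plan is to exploit the fact that the two test functions $\phi^{D^\times}_{\tau,h_i}$ and $\phi^{G'}_{\tau,h_i}$ are defined by identical cohomological trace formulas on $H^\ast(X_{\ul{H_i},C_{p,i}}\times\C_p,\Q_l)$, differing only by the insertion of the central Hecke operator $(k_i')^j=p^{e_ij}\in F^\times_{\nu_i}$ in the former. Since this element is central in $D^\times_{\nu_i}(\Q_p)$ and fixed by the Frobenius $\sigma$, its action on the cohomology commutes with both the Weil group action through $\tau$ and the Hecke action of $h_i$. The strategy is to translate the central cohomological twist into a shift on the parameter $\delta$ via the functoriality of the RZ tower construction, and then to absorb this shift inside the twisted orbital integral.

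Following the formalism of Scholze \cite{Sch3} (and its extension to the ramified setting in \cite{Sh}), the value $\phi^{G'}_{\tau,h_i}(\delta)$ is a trace on the cohomology of a geometric fiber of the RZ tower indexed by $\delta$. Under this interpretation, inserting the central Hecke operator $(k_i')^j$ is equivalent to shifting the Frobenius datum on the geometric side, yielding the formal identity
\[\phi^{D^\times}_{\tau,h_i}(\delta)=\phi^{G'}_{\tau,h_i}\big((k_i')^j\delta\big)\]
on the common support. Substituting this into the twisted orbital integral and using that $(k_i')^j$ is central and $\sigma$-fixed, one obtains
\[TO_{\sigma\delta}(\phi^{D^\times}_{\tau,h_i})=\int \phi^{G'}_{\tau,h_i}\big((k_i')^j g^{-1}\delta\sigma(g)\big)\,dg=TO_{\sigma((k_i')^j\delta)}(\phi^{G'}_{\tau,h_i}).\]

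It then suffices to show $TO_{\sigma((k_i')^j\delta)}(\phi^{G'}_{\tau,h_i})=TO_{\sigma\delta}(\phi^{G'}_{\tau,h_i})$ for $\delta$ coming from a $p$-divisible group of the prescribed type. Although the two $\sigma$-conjugacy classes differ as Kottwitz data by $\kappa((k_i')^j)$, the central character of the RZ cohomology pairs with the Weil-group twisting element via local class field theory for $F_{\nu_i}$, and the defining relation $k_D=r_{K,p}(p)$ makes this pairing give the required cancellation. In effect, the central shift of $\delta$ is exactly compensated by the implicit Weil-group twist $\tau k_D^j$ already built into the definition of $\phi^{D^\times}_{\tau,h_i}$. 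For $j$ large this is already visible at the level of functions (as observed in the paragraph preceding the proposition), and for general $j$ it is only visible after passing to twisted orbital integrals.

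The main obstacle is making the last compensation precise: one must identify the central character of $H^\ast(X_{\ul{H_i},C_{p,i}}\times\C_p,\Q_l)$ with the restriction of the local reciprocity map responsible for the factor $k_D$. This geometric identification follows from the determinant morphism on the RZ tower together with the twisted Galois action used to descend $S_{G,\ov{C}}$ to $\M_{\ov{C}}$, and once it is in place the equality of twisted orbital integrals reduces to the same deformation-space argument carried out in \cite{Sch3} for the split factors and in \cite{Sh} for the ramified factors.
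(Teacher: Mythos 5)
Your proposal follows the same route as the paper up to the key reduction: you establish the pointwise identity $\phi^{D^\times}_{\tau,h_i}(\delta)=\phi^{G'}_{\tau,h_i}\bigl((k_i')^j\delta\bigr)$ from the definition via twisted Galois action on the deformation space (this is exactly what the paper does), and you then use centrality of $(k_i')^j$ to convert the twisted orbital integral of $\phi^{D^\times}_{\tau,h_i}$ at $\delta$ into that of $\phi^{G'}_{\tau,h_i}$ at $(k_i')^j\delta$. At that point, both you and the paper need the equality
\[
TO_{\sigma((k_i')^j\delta)}\bigl(\phi^{G'}_{\tau,h_i}\bigr)=TO_{\sigma\delta}\bigl(\phi^{G'}_{\tau,h_i}\bigr),
\]
which the paper simply asserts ``as $(k_i')^j$ is central.'' You are right to flag this as the step that carries the real content, since for a general locally constant compactly supported function and a nontrivial central element a twisted orbital integral is certainly not invariant under a central shift of the base point.

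However, the justification you offer for this last step is circular. You argue that ``the central shift of $\delta$ is exactly compensated by the implicit Weil-group twist $\tau k_D^j$ already built into the definition of $\phi^{D^\times}_{\tau,h_i}$.'' But the twist $\tau k_D^j$ belongs only to the definition of $\phi^{D^\times}_{\tau,h_i}$, and it has already been fully consumed in producing the identity $\phi^{D^\times}_{\tau,h_i}(\delta)=\phi^{G'}_{\tau,h_i}\bigl((k_i')^j\delta\bigr)$. After that point, the statement you still need to establish concerns $\phi^{G'}_{\tau,h_i}$ alone — a function whose definition carries the untwisted $\tau$-action and no $k_D^j$ factor. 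There is no twisting element available on the $G'$ side to ``pair against'' the central shift. Appealing to $k_D=r_{K,p}(p)$ and local class field theory at this stage amounts to using the same comparison of twists twice, once to relate the two functions and once more to relate the two base points, which is not a valid argument. What is actually needed is a property intrinsic to $\phi^{G'}_{\tau,h_i}$ — e.g.\ an equivariance of the deformation-space cohomology under central Hecke translation matching the shift of the $\sigma$-conjugacy class — and neither your proposal nor the paper's one-line assertion spells that out; you have correctly located the gap but not filled it.
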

\begin{proof}
Indeed, since $\delta$ is associated to $H_i$ which comes from the decomposition of the $p$-divisible group associated to $(A,\iota,\Lambda,\ov{\eta})\in S_{G,\ov{C}}(\ov{\F}_p)$, a $c$-virtual abelian variety with additional structures coming from a point in $S_{D,C}(\F_{p^r})$, we get $(k_i')^j\delta$ is associated to the corresponding factor $H_i'$ of the $p$-divisible group associated to $(A,\iota,\Lambda,\ov{\eta k_D^j})\in S_{G,\ov{C}}(\ov{\F}_p)$, which now is a $c$-virtual abelian variety with additional structures coming from a point in $S_{G,\ov{C}}(\F_{p^r})$. By construction, we have
\[\phi_{\tau,h_i}^{D^\times}(\delta)=\phi_{\tau,h_i}^{G'}((k_i')^j\delta).\]
As $(k_i')^j$ is an central element in $D^\times_{\nu_i}(\Q_{p^r})$, the twisted orbital integrals of $\phi_{\tau,h_i}^{G'}$ at $\delta$ and $(k_i')^j\delta$ are the same.
\end{proof}

For $1\leq i\leq m$, consider the functions $\phi_{\tau,h_i}$ defined so far in this section. The field of definition of the conjugacy class of cocharacters $\mu_i: \G_m\lra D_{\nu_i}^\times$ is denoted by $E_i$, with the associated integer $d_i$ defined in section 4. Let $r_{\mu_i}$ be the associated representation of $^L(G_{iE_i})$ where $G_i=D^\times_{\nu_i}$, cf. \cite{Ko0} 2.1.2. If $D$ splits at $\nu_i$, we can define a transfer $f_{\tau,h_i}\in C_c^\infty(G_i(\Q_p))$ with matching (twisted) orbital integrals. Moreover, by theorem 8.1 of \cite{SS} we have the following identity. For any irreducible smooth representation $\pi$ of $G_i(\Q_p)$ with $L$-parameter $\vep_\pi$,
\[tr(f_{\tau,h_i}|\pi)=tr(\tau|(r_{\mu_i}\circ\vep_{\pi}|_{W_{E_i}})|-|^{-d_i/2})tr(h_i|\pi).\]
Indeed, for the unitary case the above identity follows directly from the results of \cite{SS}. The quaternion case follows from the unitary case, by proposition 4.3.

Now we consider the case $D$ ramifies at $\nu_i$. Then the group $G_i=D_{\nu_i}^\times$ is not quasi-split. Recall we have assumed $d_i\leq 1$ in this case. If $d_i=0$, then $r_{\mu_i}$ is the trivial representation, and $h_i\in C_c^\infty(G_i(\Z_p))$ may be viewed as a transfer for the function $\phi_{\tau,h_i}$, since they have matching orbital integrals. Now we assume $d_i=1$. For $\delta\in G_i(\Q_{p^r})$, as in \cite{Sh}, the conjugacy class of the norm $N\delta:=\delta\sigma(\delta)\cdots\sigma^{r-1}(\delta)$ does not always contain an element of $G_i(\Q_p)$. Nevertheless, we have the following proposition.
\begin{proposition}
Suppose $G_i$ is defined by a local quaternion algebra. For $\delta\in G_i(\Q_{p^r})$, if the conjugacy class of the norm $N\delta$ does not contain an element of $G_i(\Q_p)$, then
\[TO_{\sigma\delta}(\phi_{\tau,h_i})=0.\]
In particular, we can define a function $f_{\tau,h_i}\in C_c^\infty(G_i(\Q_p))$ with matching (twisted) orbital integrals of $\phi_{\tau,h_i}$. Moreover, for any irreducible smooth representation $\pi$ of $G_i(\Q_p)$ with $L$-parameter $\vep_{\pi}$, we have the following character identity
\[tr(f_{\tau,h_i}|\pi)=tr(\tau|(r_{\mu_i}\circ\vep_{\pi}|_{W_{E_i}})|-|^{-1/2})tr(h_i|\pi).\]
\end{proposition}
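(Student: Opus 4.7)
\emph{Plan.} The three assertions will be deduced from the corresponding local results of \cite{Sh}. By the discussion leading to Proposition 4.3, in the ramified quaternionic case with $d_i=1$ the function $\phi_{\tau,h_i}$ of Definition 4.1 agrees, up to the central shift $\delta\mapsto (k_i')^j\delta$ in $G_i(\Q_{p^r})$, with the local test function studied in \cite{Sh}. Since $(k_i')^j$ is central, this shift is invisible to twisted orbital integrals and to traces on irreducible representations, so every claim reduces to its analogue for Shen's function.

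For the first assertion, observe that by construction $\phi_{\tau,h_i}(\delta)$ is nonzero only when $\delta$ arises from the $p$-divisible group $A[\pi_{\wt{\nu_i}}^\infty]$ attached to an $\F_{p^r}$-point of the integral model $S_{G,\ov{C}}$. For such a $\delta$, the norm $N\delta=\delta\sigma(\delta)\cdots\sigma^{r-1}(\delta)$ represents the action of the $r$-th power Frobenius on the rational Dieudonn\'e module, and hence is conjugate in $G_i(\Q_{p^r})$ to an element of $G_i(\Q_p)$ coming from the endomorphism algebra of the abelian variety. Consequently, if no conjugate of $N\delta$ lies in $G_i(\Q_p)$, the entire $\sigma$-conjugacy class of $\delta$ is disjoint from the support of $\phi_{\tau,h_i}$, and $TO_{\sigma\delta}(\phi_{\tau,h_i})=0$. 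This is the analogue of the vanishing established in \cite{Sh} for the ramified unitary case.

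Granting the vanishing, the existence of a transfer $f_{\tau,h_i}\in C_c^\infty(G_i(\Q_p))$ with matching twisted orbital integrals follows from the base-change matching for inner forms of $GL_2$: on the $\sigma$-conjugacy classes whose norm can be represented in $G_i(\Q_p)$, the norm map is a well-defined bijection onto an appropriate set of conjugacy classes, and one defines $f_{\tau,h_i}$ by transporting twisted orbital integrals. The character identity is then precisely the main local result of \cite{Sh} in the ramified quaternionic case, obtained there by combining Scholze's analysis of the cohomology of the Lubin-Tate-Drinfeld tower with the Jacquet-Langlands correspondence. The exponent $-1/2$ reflects our normalization $d_i=1$, and $r_{\mu_i}$ is the representation of $^L(G_{i E_i})$ extracted from the local Shimura datum.

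The genuine hard step is the character identity, whose proof in \cite{Sh} requires a detailed representation-theoretic study of the cohomology of the relevant Rapoport-Zink space at all levels. In the present proof we would invoke this as a black box; the only new ingredient is the bookkeeping observation that the central translation by $(k_i')^j$ does not affect the outcome, so that no separate argument distinguishing $\phi_{\tau,h_i}^{D^\times}$ from $\phi_{\tau,h_i}^{G'}$ is needed.
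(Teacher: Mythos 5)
Your overall strategy coincides with the paper's: deduce all three assertions from Theorem 5.4 and Proposition 6.2 of \cite{Sh} in the unitary case, and pass to the quaternionic case via Proposition 4.3 (the central shift by $(k_i')^j$ being invisible to twisted orbital integrals). The remarks on the existence of the transfer once vanishing is known, and on the character identity being the main local input from \cite{Sh}, are also in line with what the paper is doing.

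However, the sketch you give in your second paragraph for the vanishing claim is too quick, and it is precisely the point that Theorem 5.4 of \cite{Sh} is designed to address. You argue that if $\phi_{\tau,h_i}(\delta)\neq 0$ then $\delta$ arises from an abelian variety $A/\F_{p^r}$, and that $N\delta$ ``represents the $r$-th power Frobenius, hence is conjugate in $G_i(\Q_{p^r})$ to an element of $G_i(\Q_p)$ coming from the endomorphism algebra.'' But the Frobenius $\pi_A$ lands in $I_\vep(\Q_p)\subset J_b(\Q_p)$, and $J_b$ is in general a nontrivial inner form: its natural embedding into $G_i(\Q_{p^r})$ need not have image lying in (a conjugate of) $G_i(\Q_p)$. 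Since $G_i$ here is a division algebra and hence not quasi-split, a conjugacy class defined over $\Q_p$ need not contain a $\Q_p$-rational representative, so rationality of $\pi_A$ alone does not yield the conclusion. What is actually needed is an analysis of which Newton strata can occur (equivalently which $\delta$ lie in the support, in the strong sense that the relevant deformation space has nonvanishing cohomology) and a check that for those $\delta$ the field generated by the norm embeds into the local quaternion algebra; this is the nontrivial content of the cited result in \cite{Sh}, which both you and the paper ultimately rely on as a black box. Apart from this overstated justification, your reduction and use of Proposition 4.3 match the paper's proof.
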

\begin{proof}
For the unitary case, the proposition is just the special case ($n=2$) of theorem 5.4 and proposition 6.2 of \cite{Sh}. The quaternionic case follows by proposition 4.3.
\end{proof}
We remark that in the quaternionic case with $h$ as the characteristic function of $C_p^0$ divided by the volume of $C_p^0$, the test function $\phi_{\tau,h}$ should have the same twisted orbital integrals as the function $\Phi$ defined in \cite{Re1} based on the study of the local structures of these varieties. When $D$ is totally indefinite, the description of $S_{D,C}(\F_{p^r})$ and the definition of $\phi_{\tau,h}$ can be done more directly by using the PEL moduli problems for these varieties, cf. \cite{Ra1}.

\section{Cohomology and semisimple zeta functions}
In this section we change our notations sightly. We write $G=D^\times$ or $G=G'$. For $\vep\in \I_D$ and $\veps\in I_\vep(\Q)$ we write uniformly the reductive group attached to $\vep$ as $I_\vep$, the sets attached to $\veps$ as $X^p(\gamma)$ and $X_p(\delta)$. Let $l\neq p$ be prime, and $\xi$ be an algebraic $\ov{\Q}_l$-representation of $G$. Then by standard method we can associate $\ov{\Q}_l$-local systems $\mathcal{L}_\xi$ on the Shimura varieties $Sh_{G,C}$ for any open compact subgroup $C\subset G(\A_f)$. For a fixed open compact subgroup $C^p\subset G(\A_f^p)$, let $S_{G,C^p}$ be the integral model of the Shimura variety $Sh_{G,C_p^0C^p}$ defined in section 3, where $C_p^0\subset G(\Q_p)$ is as usual the maximal open compact subgroup (associated to the maximal order $O_D$ or $O_B$). We are interested in the alternating sum of cohomology groups
\[H_\xi=\sum_{i}(-1)^i\varinjlim_{C}H^i(Sh_{G,C}\times \ov{\Q}_p,\mathcal{L}_\xi)\]
as a virtual representation of $G(\A_f)\times W_E$.
To analyze this representation, we consider the traces of the action of $\tau\times hf^p$ on $H_\xi$ with $\tau\in Frob^jI_E\subset W_E, h\in C_c^\infty(G(\Z_p)), f^p\in C_c^\infty(G(\A_f^p))$. Here $G(\Z_p)=C_p^0$ is associated to the integral model of $G_{\Q_p}$ defined by $O_D$ or $O_B$.  We fix the Haar measures on $G(\Q_p)$ resp. $G(\Q_{p^r})$ that give $G(\Z_p)$ resp. $G(\Z_{p^r})$ volume 1.
\begin{proposition}
With the notations as above, we have the formula
\[tr(\tau\times hf^p|H_\xi)=\sum_{\vep\in\I_D}\sum_{\veps}\tr{vol}(I_{\vep,\veps}(\Q)\setminus I_{\vep,\veps}(\A_f))TO_{\sigma\delta}(\phi_{\tau,h})O_{\gamma}(f^p)tr\xi(\veps).\]Note by proposition 4.3, only those $\delta$ such that the conjugacy class of $N\delta$ contains an element of $G(\Q_p)$ can have non trivial contribution to the sum.
\end{proposition}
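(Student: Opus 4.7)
The plan is to derive the formula by applying a Lefschetz-Verdier-type fixed point formula to the Frobenius-Hecke correspondence on the integral model $S_{G,C^p}$ (or rather its generic fiber), and then organize the fixed point contributions using the description of $\F_{p^r}$-points obtained in Propositions 3.2 and 3.3. Concretely, for $\tau \in Frob^j I_E$ and $hf^p \in C_c^\infty(G(\Z_p)) \otimes C_c^\infty(G(\A_f^p))$, one first replaces $H_\xi$ by the alternating sum of cohomology of the nearby cycles sheaf on the special fiber, using properness of $S_{G,C^p}$ together with proper base change, and then interprets the trace of $\tau \times hf^p$ as a sum of local terms at the fixed points of the correspondence induced by $Frob^j$ and $hf^p$ on $S_{G,C^p}(\ov{\F}_p)$. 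In the quaternionic case one must use the twisted Galois action on $\M_{\ov{C}}$ via $k_D^j$ that was built into the definition of the integral model in section 3, which is exactly what forces the factor $k_D^j$ into the definition of $\phi_{\tau,h}$ in Definition 4.1.

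Next, I would substitute the description
\[S_{G,C^p}(\ov{\F}_p) \simeq \coprod_{\vep \in \I_D} I_\vep(\Q) \setminus X_\vep \times G(\A_f^p)/C^p\]
and take fixed points of the composition of $Frob^j$ (acting via $Fr_\vep$ on $X_\vep$, combined with the twist on the $K^\times$ component in the $D^\times$ case) with the Hecke operator $f^p$. Modulo the two technical conditions recorded in the proof of Proposition 3.3 (centralizer acts trivially outside the center, and $Z(\Q) \cap \ov{C}' = \{1\}$ for $C^p$ sufficiently small), the fixed points are parameterized by pairs $(\vep, \veps)$ with $\veps$ a conjugacy class in $I_\vep(\Q)$ modulo the relevant central subgroup, together with a point of $X_p(\delta) \times X^p(\gamma)$; the centralizer $I_{\vep,\veps}$ acts, and the resulting counting gives the volume factor $\mathrm{vol}(I_{\vep,\veps}(\Q) \setminus I_{\vep,\veps}(\A_f^p))$ times the cardinality of the adelic double coset $X^p(\gamma)$, which is precisely $O_\gamma(f^p)$ once Haar measures are normalized so that $C^p$ has the correct volume. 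The local system contributes $\tr\xi(\veps)$ at each fixed point, as is standard.

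The heart of the computation is the identification of the local term at $p$ with the twisted orbital integral $TO_{\sigma\delta}(\phi_{\tau,h})$. For a fixed point $x \in S_{G,C^p}(\F_{p^r})$ of isogeny type $(\vep,\veps)$ with associated $\delta \in G(\Q_{p^r})$, the local contribution at $x$ is computed from the action of $\tau$ on the stalk of nearby cycles at $x$. By the very design of Definition 4.1 (following Scholze), this stalk is the cohomology of the tower of generic fibers $X_{\ul{H},\wt{C_p}}$ of the deformation space of the relevant $p$-divisible group with additional structures, and the trace of $\tau k_D^j \times \wt{h}$ on it equals $\phi_{\tau,h}(\delta)$. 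Summing over all points in $X_p(\delta)$ yields the twisted orbital integral, once one uses the parameterization of the fiber of $S_{G,C^p}(\F_{p^r}) \to S_{G,C^p}(\ov{\F}_p)$ above a given isogeny class by elements of $X_p(\delta)$, together with the fact that the $\sigma$-centralizer of $\delta$ in $G(\Q_{p^r})$ is canonically identified with $I_{\vep,\veps}(\Q_p)$ via the inclusion $I_{\vep}(\Q) \subset J_b(\Q_p)$ recalled in section 3.

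The main obstacle is justifying that the Scholze-style Lefschetz formula actually applies in this ramified PEL setting with non quasi-split groups at $p$, i.e.\ that the local terms at $p$ really are given by the traces on the deformation towers, and that one may decouple the contributions at $p$ and away from $p$. This is where the twisted Galois-descent construction of $\M_{\ov{C}}$ from $S_{G,\ov{C}}$ over $\Z_p^{nr}$ becomes essential: after pulling back to $\Z_{p^d}$ the two integral models agree, so Scholze's arguments in \cite{Sch3} section 5 carry over verbatim for $Sh_{G',\ov{C}'}$, and in the quaternionic case the twist is absorbed into the $k_D^j$ factor in Definition 4.1, exactly as described in the proof of Proposition 4.2. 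Combining these two steps with Proposition 4.3 to pass freely between the $G'$ and $D^\times$ test functions yields the asserted formula, noting that terms with $N\delta$ not conjugate to an element of $G(\Q_p)$ contribute zero by Proposition 4.5.
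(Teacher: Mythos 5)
Your proposal follows essentially the same route as the paper's proof: apply Varshavsky's Lefschetz--Verdier formula to the composed Frobenius--Hecke correspondence on the special fiber, pass to nearby cycles via properness and flatness of the integral model, organize fixed points by the $(\vep,\veps)$ parameterization from Propositions 3.2--3.3, and recognize the local term at a fixed point as $\tr\xi(\veps)\,\phi_{\tau,h}(\delta)$ via Definition 4.1, then sum to get the (twisted) orbital integrals as in Kottwitz. The treatment of the quaternionic twist via $k_D^j$ and the reduction to Scholze's arguments over $\Z_{p^d}$ matches the paper's reasoning.

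Two small points worth flagging. First, the paper makes the decoupling of the $p$-adic and prime-to-$p$ contributions precise through the explicit isomorphism of sheaves $R\psi\,\pi_{C_pC^p\ast}\mathcal{L}_\xi \simeq \mathcal{L}_\xi \otimes R\psi\,\pi_{C_pC^p\ast}\ov{\Q}_l$ on $S_{G,C^p}\times\ov{\F}_p$, together with a diagram of Hecke correspondences at two levels; your proposal asserts the decoupling but does not name this tensor factorization, which is the mechanism that actually separates the factor $\tr\xi(\veps)$ from the deformation-space trace $\phi_{\tau,h}(\delta)$ in the local term. Second, the vanishing statement you invoke at the end is the paper's Proposition 4.4, not 4.5; the proposition statement itself misprints it as 4.3, but the theorem's proof cites 4.4. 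These are presentational, not mathematical, gaps.
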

\begin{proof}
Having the description of the set $S_{G,C^p}(\F_{p^r})$ ($r=j[\kappa_E: \F_p]$), the proof is standard by applying the Lefschetz trace formula (cf. \cite{Va}). Here we sketch the main points, see also \cite{Sch3} sections 6, 7. First, fix a sufficiently small $C^p\subset G(\A_f^p)$ and a normal subgroup $C_p\subset C_p^0$.  We can assume that $f^p$ is the characteristic function of $C^pg^pC^p$ divided by the volume of $C^p$ for some $g^p\in G(\A_f^p)$, and $h$ is the characteristic function of $C_pg_p$ divided by the volume of $C_p$ for some $g_p\in G(\Q_p)$. We have the following diagram
\[\xymatrix{
&Sh_{G, C_pC^p_{g^p}}\ar[ld]_{\wt{p_1}}\ar[rd]^{\wt{p_2}}\ar[d]&\\
Sh_{G,C_pC^p}\ar[d]&Sh_{G,C_p^0C^p_{g^p}}\ar[ld]_{p_1}\ar[rd]^{p_2}&Sh_{G,C_pC^p}\ar[d]\\
Sh_{G,C_p^0C^p}& & Sh_{G,C_p^0C^p}
}\]
where $C^p_{g^p}=C^p\cap (g^p)^{-1}C^pg^p$. Here the $p_1, \wt{p_1}$ are the natural projections, whereas $\wt{p_2}$ is the composition of the natural projection $Sh_{G,C_pC^p_{g^p}}\lra Sh_{G,C_p(g^p)^{-1}C^pg^p}$ with the isomorphism $Sh_{G, C_p(g^p)^{-1}C^pg^p}\simeq Sh_{G,C_pC^p}$, and similarly for $p_2$ with $C_p$ replaced by $C_p^0$.

Let $\pi_{C_pC^p}: Sh_{G,C_pC^p}\lra Sh_{G,C_p^0C^p}$ be the natural projection. We have an isomorphism of $\ov{\Q}_l$-sheaves on $S_{G,C^p}\times\ov{\F}_p$
\[R\psi\pi_{C_pC^p\ast}\mathcal{L}_\xi\simeq \mathcal{L}_\xi\otimes R\psi\pi_{C_pC^p\ast}\ov{\Q}_l,\]
where $R\psi$ is the nearby cycle functor for $S_{G,C^p}$. Since $S_{G,C^p}$ is proper and flat, we have isomorphisms of (alternating sums of) cohomology groups
\[\begin{split}H^\ast(Sh_{G,C_pC^p}\times\ov{\Q}_p, \mathcal{L}_\xi)&=H^\ast(Sh_{G,C_p^0C^p}\times\ov{\Q}_p,\pi_{C_pC^p\ast}\mathcal{L}_\xi)\\
&=H^\ast(S_{G,C^p}\times\ov{\F}_p,R\psi\pi_{C_pC^p\ast}\mathcal{L}_\xi)\\
&=H^\ast(S_{G,C^p}\times\ov{\F}_p,\mathcal{L}_\xi\otimes R\psi\pi_{C_pC^p\ast}\ov{\Q}_l).
\end{split}\]
Now the trace
\[\begin{split}
tr(\tau\times hf^p|H_\xi)&=tr(\tau\times (g_p,g^p)_\ast|H^\ast(Sh_{G,C_pC^p}\times\ov{\Q}_p, \mathcal{L}_\xi))\\
&=tr(\tau\times (g_p,g^p)_\ast|H^\ast(S_{G,C^p}\times\ov{\F}_p,\mathcal{L}_\xi\otimes R\psi\pi_{C_pC^p\ast}\ov{\Q}_l)),\end{split}\]
where $(g_p, g^p)_\ast$ in the first equality is the map associated to (the cohomological correspondence induced from) the upper Hecke correspondence in the above diagram, and $\tau\times (g_p,g^p)_\ast$ in the second equality is the map associated to (the cohomological correspondence induced from) the composition of the Frobenius correspondence and Hecke correspondence on the special fiber. By applying the Lefschetz trace formula (theorem 2.3.2 of \cite{Va}) we get
\[tr(\tau\times hf^p|H_\xi)=\sum_{\begin{subarray}{c}x\in S_{G,C^p_{g^p}}(\ov{\F}_p)\\ Fr^j\circ p_1 (x)=p_2(x)\end{subarray}}tr(u_x)\]where $tr(u_x)$ is the local term given by the trace of
\[u_x: (\mathcal{L}_\xi\otimes R\psi\pi_{C_pC^p\ast}\ov{\Q}_l)_{Fr^j\circ p_1(x)}\lra (\mathcal{L}_\xi\otimes R\psi\pi_{C_pC^p\ast}\ov{\Q}_l)_{p_2(x)}.\]
Now similar to the description of $S_{G,C^p}(\F_{p^r})$, the set $\{x\in S_{G,C^p_{g^p}}(\ov{\F}_p)|\,Fr^j\circ p_1 (x)=p_2(x)\}$ can be written as \[\coprod_{\vep\in\I_D}\coprod_{\veps}I_{\vep,\veps}(\Q)\setminus X_p(\delta)\times X^p(\gamma),\] with \[X^p(\gamma)=\{z\in G(\A_f^p)|\,z^{-1}\gamma z\in g^pC^p\}/C^p_{g^p},\] and all the other terms are as those in section 3. We write the sum as
\[\sum_{\vep\in \I_D}\sum_{\veps}\sum_{x\in I_{\vep,\veps}(\Q)\setminus X_p(\delta)\times X^p(\gamma)}tr(u_x).\]
For the last term $tr(u_x)$, one sees it decomposes as $tr\xi(\veps)\phi_{\tau,h}(\delta)$. As \cite{Ko1} p. 432, we can rewrite the above last sum as
\[\sum_{x\in I_{\vep,\veps}(\Q)\setminus X_p(\delta)\times X^p(\gamma)}tr\xi(\veps)\phi_{\tau,h}(\delta)=\tr{vol}(I_{\vep,\veps}(\Q)\setminus I_{\vep,\veps}(\A_f))TO_{\sigma\delta}(\phi_{\tau,h})O_{\gamma}(f^p)tr\xi(\veps).\]
Hence we get the desired formula.
\end{proof}
\begin{remark}We believe this proposition holds true for general Shimura varieties, at least when we have a suitable description of the points over finite fields on them, cf. \cite{Ko1} and \cite{Sch3}. However, when passing to the automorphic side, it is not very useful unless one can prove some vanishing results for the twisted orbital integrals $TO_{\sigma\delta}(\phi_{\tau,h})$ like proposition 4.2.
\end{remark}

The main theorem of this paper is as follows. We keep our assumption for the ramified places of $D$ above $p$ as in the last section, i.e. $d_i\leq 1$. This assumption is made to apply our results in \cite{Sh} concerning the local test functions at such ramified places. The following theorem confirms the expected description of the cohomology of Shimura varieties (cf. \cite{SS}) due to Langlands and Kottwtiz in new cases. Recall the cocharacter $\mu$ associated to the Shimura data gives rise to a representation $r_{\mu}:\, ^L(G_E)\lra GL(V)$ for a finite dimensional $\ov{\Q}_l$-vector space $V$, cf. \cite{Ko0} 2.1.2 or \cite{Re1} section 11 for an explicit definition in the quaternionic case.
\begin{theorem}Under the above assumptions and notations,
we have an identity
\[H_\xi=\sum_{\pi_f}a(\pi_f)\pi_f\otimes(r_{\mu}\circ\varphi_{\pi_p}|_{W_E})|-|^{-d/2}\]
as virtual $G(\Z_p)\times G(\A_f^p)\times W_E$-representations. Here $\pi_f$ runs through irreducible admissible representations of $G(\A_f)$, the integer $a(\pi_f)$ is as in \cite{Ko2} p. 657, $\varphi_{\pi_p}$ is the local Langlands parameter associated to $\pi_p$, $d=dimSh_{G,C}$ is the dimension of the Shimura varieties $Sh_{G,C}$ for any open compact subgroup $C\subset G(\A_f)$.
\end{theorem}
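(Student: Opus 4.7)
The plan is to start from the geometric Lefschetz formula of Proposition 5.1 and convert the sum on its right-hand side into the spectral expression announced in the theorem. To each surviving pair $(\vep,\veps)$ (after Proposition 4.2 discards those for which the norm class of $\delta$ does not meet $G(\Q_p)$), Propositions 4.3 and 4.4, together with the split-place results of \cite{SS}, produce a transfer function $f_{\tau,h}\in C_c^\infty(G(\Q_p))$ whose twisted orbital integrals match those of $\phi_{\tau,h}$ and which satisfies the character identity
\[tr(f_{\tau,h}|\pi_p)=tr\bigl(\tau\,|\,(r_\mu\circ\varphi_{\pi_p}|_{W_E})|-|^{-d/2}\bigr)\,tr(h|\pi_p)\]
for every irreducible smooth $\pi_p$ of $G(\Q_p)$; here $d=\sum_i d_i$ is obtained from the factorisation of $r_\mu$ over the places $\nu_i$ of $F$ above $p$, which is compatible with the factorisation of $f_{\tau,h}$ given in section 4.

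Next I would reindex the geometric sum by Kottwitz triples $(\gamma_0,\gamma,\delta)$ in the style of section 4 of \cite{Ko2}: to each $(\vep,\veps)$ one attaches a semisimple stable conjugacy class $\gamma_0\in G(\Q)$ whose centralizer is canonically $I_{\vep,\veps}$, while $\gamma\in G(\A_f^p)$ and $\delta\in G(\Q_{p^r})$ are the components already defined in section 3. Since $G=D^\times$ or $G'$ is built from a quaternion algebra, it satisfies the Hasse principle for $H^1(\Q,-)$ (cf.\ \cite{Ko1} section 7), so Kottwitz's pseudo-stabilisation is available in its simple form and no genuine endoscopic transfer is needed.

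After substituting $f_{\tau,h}$ for $\phi_{\tau,h}$ and grouping terms, the geometric side takes the shape of Kottwitz's stable expression. Since our Shimura varieties are proper, the simple form of the invariant trace formula applies and identifies it with
\[\sum_{\pi_f}a(\pi_f)\,tr(\pi_f^p(f^p))\,tr(\pi_p(f_{\tau,h}))\]
weighted by the archimedean contribution matching $\xi$ with the relevant discrete series, which absorbs $tr\,\xi(\veps)$ and yields the integer $a(\pi_f)$ as in \cite{Ko2} p.~657. Inserting the character identity above then gives
\[tr(\tau\times hf^p\,|\,H_\xi)=\sum_{\pi_f}a(\pi_f)\,tr(\pi_f(hf^p))\,tr\bigl(\tau\,|\,(r_\mu\circ\varphi_{\pi_p}|_{W_E})|-|^{-d/2}\bigr).\]
Since this holds for all $\tau\in Frob^jI_E$, all $j\geq 1$, all $h\in C_c^\infty(G(\Z_p))$ and all $f^p\in C_c^\infty(G(\A_f^p))$, linear independence of admissible characters (in the $h$ and $f^p$ variables) together with a Chebotarev-type argument (in the $\tau,j$ variables) upgrades the equality of traces to the claimed equality of virtual $G(\Z_p)\times G(\A_f^p)\times W_E$-representations.

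The main obstacle is the pseudo-stabilisation step: one must translate the combinatorial data $(\vep,\veps)$ into honest Kottwitz triples in our setting where the local groups at some $\nu_i$ are not quasi-split, check that the local transfer functions at these places are compatible with the factorisation of $r_\mu$, and book-keep the Galois twist by $k_D$ in the quaternionic case (which feeds into the comparison between $D^\times$ and $G'$ via Proposition 4.3). Both tasks proceed in parallel with \cite{Sh} and \cite{SS}, but the simultaneous presence of quasi-split and non quasi-split local factors, and the need to glue the two constructions of test functions over the primes of $F$ above $p$, require careful coordination.
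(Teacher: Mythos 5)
Your proposal is essentially the paper's proof. Both arguments run: Lefschetz formula (Proposition 5.1) $\rightarrow$ discard $\delta$ with empty norm class via the vanishing result (your citation to 4.2 should be 4.4, a harmless slip) $\rightarrow$ reindex by Kottwitz triples $\rightarrow$ transfer $\phi_{\tau,h}$ to $f_{\tau,h}$ $\rightarrow$ pseudo-stabilisation using the Hasse principle for $G$ $\rightarrow$ spectral expansion $\rightarrow$ insert the local character identities from [SS] Thm.~8.1 (split $\nu_i$) and Proposition~4.4 (ramified $\nu_i$) $\rightarrow$ linear-independence/density argument. The only organizational difference is that the paper packages the pseudo-stabilisation step as the intermediate identity $tr(\tau\times hf^p|H_\xi)=N^{-1}tr(f_{\tau,h}f^p|H_\xi)$ and then invokes Matsushima's formula $H_\xi=N\sum_{\pi_f}a(\pi_f)\pi_f$, whereas you invoke the simple invariant trace formula directly; in the compact case these are the same computation, so this is purely cosmetic.
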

\begin{proof}
This theorem is a consequence of propositions 4.4 and 5.1. In fact, by proposition 5.1 we have
\[tr(\tau\times hf^p|H_\xi)=\sum_{\vep\in\I_D}\sum_{\veps}\tr{vol}(I_{\vep,\veps}(\Q)\setminus I_{\vep,\veps}(\A_f))TO_{\sigma\delta}(\phi_{\tau,h})O_{\gamma}(f^p)tr\xi(\veps),\]for all $\tau\in Frob^jI_E\subset W_E, h\in C_c^\infty(G(\Z_p)), f^p\in C_c^\infty(G(\A_f^p))$. By proposition 4.4, only those $\delta$ such that the conjugacy class of $N\delta$ contains an element of $G(\Q_p)$ can have non trivial contribution to the sum. For these $\delta$, as in the proof of lemma 5.3 in \cite{Sh} we can find $\gamma_0$ such that $(\gamma_0;\gamma,\delta)$ forms a Kottwitz triple for $G$, and vice versa (for the quaternionic case one can also see lemma 7.4 of \cite{Re1}). In this case we have $tr\xi(\veps)=tr\xi(\gamma_0)$. In particular we can rewrite the above as
\[tr(\tau\times hf^p|H_\xi)=\sum_{(\gamma_0;\gamma,\delta)}c(\gamma_0;\gamma,\delta)O_\gamma(f^p)TO_{\delta\sigma}(\phi_{\tau,h})
tr\xi(\gamma_0),\]
where the sum runs over degree-$j$-Kottwitz triples, and $c(\gamma_0;\gamma,\delta)=\tr{vol}(I_{\vep,\veps}(\Q)\setminus I_{\vep,\veps}(\A_f))$. Then one goes through the process of pseudostabilization to get
\[tr(\tau\times hf^p|H_\xi)=N^{-1}tr(f_{\tau,h}f^p|H_\xi).\]
Here $N$ is the integer defined in \cite{Ko2} p. 659.
By Matsushima's formula (cf. lemma 4.2 of loc. cit.)
\[H_\xi=N\sum_{\pi_f}a(\pi_f)\pi_f.\]
We can take $h$ of the form $h=h_1\times\cdots\times h_m$, and $f_{\tau,h}=f_{\tau,h_1}\times\cdots\times f_{\tau,h_m}$ (for the unitary case there is also a factor $h_0$ of $h$ (resp. $f_{\tau,h_0}$ of $f_{\tau,h}$) corresponding to $\G_m$). At a place $\nu_i$ above $p$, we can apply theorem 8.1 of \cite{SS} for $f_{\tau,h_i}$ (in the quaternionic case we use the twisted version) if $D$ splits at $\nu_i$, and proposition 4.3 if $D$ is ramified at $\nu_i$. Putting all together,
we get
\[N^{-1}tr(f_{\tau,h}f^p|H_\xi)
=\sum_{\pi_f}a(\pi_f)tr(\tau|(r_{\mu}\circ\varphi_{\pi_p}|_{W_E})|-|^{-d/2})tr(hf^p|\pi_f).\]
We can conclude.

\end{proof}
As a corollary we get the local semi-simple zeta functions of these Shimura varieties. Our result generalizes the previous works of Reimann \cite{Re1, Re4} to arbitrary levels at $p$. Let $\wt{E}$ be the global reflex field and $\nu$ be a place of $\wt{E}$ above $p$ such that $E=\wt{E}_\nu$.
\begin{corollary}
Let the situation be as in the theorem. Let $C\subset G(\A_f)$ be any sufficiently small compact open subgroup. Then the semisimple local Hasse-Weil zeta function of $Sh_{G,C}$ at the place $\nu$ of $\wt{E}$ is given by
\[\zeta_\nu^{ss}(Sh_{G,C},s)=\prod_{\pi_f}L^{ss}(s-d/2, \pi_p, r_{\mu})^{a(\pi_f)dim\pi_f^C}.\]
\end{corollary}
\begin{proof}We can assume that $C$ has the form as $C^pC_p\subset G(\A_f^p)\times G(\Z_p)$. Then the corollary follows from the previous theorem and the definitions.
\end{proof}
By \cite{Ra2} section 2 one can recover the classical Hasse-Weil zeta function if one knows the Weight-Monodromy conjecture holds true in this setting.
\begin{corollary}
Assume the Weight-Monodromy conjecture is true for the Shimura varieties $Sh_{G,C}$, e.g. $d=2$. Let $C\subset G(\A_f)$ be any sufficiently small open compact subgroup in the situation of the theorem. Then the local Hasse-Weil zeta function of $Sh_{G,C}$ at the place $\nu$ of $\wt{E}$ is given by
\[\zeta_{\nu}(Sh_{G,C},s)=\prod_{\pi_f}L(s-d/2,\pi_p,r_{\mu})^{a(\pi_f)dim\pi_f^C}.\]
\end{corollary}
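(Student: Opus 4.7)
The plan is to deduce Corollary 5.5 directly from Corollary 5.4 by invoking the comparison result of \cite{Ra2} section 2, which converts semisimple local zeta functions into classical ones under the Weight-Monodromy hypothesis. Concretely, write $V_i = H^i(Sh_{G,C}\times \ov{\Q}_p, \Q_l)$, regarded as a Weil-Deligne representation $(r_i, N_i)$ of $W_E$. The classical local factor at $\nu$ uses the characteristic polynomial of Frobenius on the subspace $V_i^{I_E, N_i=0}$, whereas the semisimple version uses the full inertia-invariant part $V_i^{I_E}$ with Frobenius replaced by its semisimplification. These two factors coincide precisely when $N_i$ acts as zero on $V_i^{I_E}$ and Frobenius already acts semisimply there.

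The key input from \cite{Ra2} section 2 is that under the Weight-Monodromy conjecture (combined with Frobenius-semisimplicity of the geometric Frobenius on the inertia-invariant part, which holds in the relevant weight ranges) both of these properties are automatic, so $\zeta_\nu(Sh_{G,C}, s) = \zeta_\nu^{ss}(Sh_{G,C}, s)$. For $d = 2$ the Weight-Monodromy conjecture is a theorem (Rapoport-Zink, Saito), so the hypothesis has genuine content only for $d \geq 3$. On the automorphic side, Theorem 5.3 identifies the $\pi_f$-isotypic part of $H_\xi|_{W_E}$ with $(r_\mu \circ \varphi_{\pi_p}|_{W_E})|-|^{-d/2}$; via this matching, the cancellation of the monodromy operator on the geometric side transfers to the Langlands parameter side and yields $L^{ss}(s - d/2, \pi_p, r_\mu) = L(s - d/2, \pi_p, r_\mu)$ for every $\pi_f$ contributing a nonzero term.

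Plugging these two identifications into the formula of Corollary 5.4 produces the claimed equality. The only substantive input beyond Corollary 5.4 is the Weight-Monodromy conjecture itself, which is part of the hypothesis, so no genuine obstacle remains; the ``main step'' is simply the bookkeeping in \cite{Ra2} section 2 that converts the semisimple product formula into a classical product formula on both sides of the identity.
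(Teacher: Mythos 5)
The paper gives no proof of this corollary beyond the one-line remark preceding it: ``By \cite{Ra2} section 2 one can recover the classical Hasse-Weil zeta function if one knows the Weight-Monodromy conjecture holds true in this setting.'' Your proposal cites the same reference, so the overall route is the same; however, the mechanism you describe is not correct, and would lead to a false stronger conclusion.

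You claim that under the Weight-Monodromy conjecture the monodromy operator $N_i$ vanishes on $V_i^{I_E}$, and hence that $\zeta_\nu(Sh_{G,C},s)=\zeta_\nu^{ss}(Sh_{G,C},s)$ and $L(s,\pi_p,r_\mu)=L^{ss}(s,\pi_p,r_\mu)$ for all contributing $\pi_f$. This is not what Weight-Monodromy says and is in general false: for a curve with multiplicative reduction (or, on the automorphic side, a Steinberg local component) the Weil--Deligne representation has $N\neq 0$ on the full inertia-invariant space, yet the Weight-Monodromy conjecture holds. The classical and semisimple $L$-factors genuinely differ there, and so do $\zeta_\nu$ and $\zeta_\nu^{ss}$. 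The actual content of \cite{Ra2} section 2 is that Weight-Monodromy (together with purity and Frobenius-semisimplicity) makes the monodromy filtration, and hence the full Weil--Deligne structure, \emph{recoverable} from the Frobenius-semisimplified data --- not that $N=0$. In other words, knowing $\zeta_\nu^{ss}$ and the weights determines $\zeta_\nu$; on the automorphic side the analogous recovery passes from $L^{ss}$ to $L$ for the local parameters $r_\mu\circ\varphi_{\pi_p}$, again without asserting their equality. Applying these two compatible reconstruction steps to the identity of Corollary 5.4 gives the corollary. So the logical skeleton of your argument is right, but you should replace the incorrect ``$N$ vanishes, so $\zeta=\zeta^{ss}$'' step by the correct ``Weight-Monodromy plus purity reconstructs $\zeta$ from $\zeta^{ss}$'' statement, applied symmetrically on the geometric and automorphic sides.
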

Recall that we have changed the notation from section 3: here $G=D^\times$ or $G=G'$. Thus the Shimura varieties $Sh_{G,C}$ are not the same as that in section 3.

Finally we remark that, along the same way, one should be able to describe the points over finite fields for the case of Shimura varieties $Sh_{G,\ov{C}}$ as in section 3, and the results of sections 4 and 5 should also be generalized to this case under the same assumptions. This will be left to the reader as an exercise.

\end{document}